\documentclass[letterpaper, 10pt, journal, onecolumn, final]{IEEEtran}
\usepackage{cite}
\usepackage{algorithm}
\usepackage{algpseudocode}
\usepackage{algorithmicx}
\usepackage{graphicx}
\usepackage{textcomp}
\usepackage{amssymb,amsmath,amsfonts,cases,mathtools}
\usepackage{microtype}
\usepackage{url}
\usepackage{hyperref}
\usepackage{xifthen}
\usepackage{xpatch}
\usepackage{amsthm}
\usepackage[dvipsnames]{xcolor}
\usepackage{tikz}
\usepackage{subfig}
\usepackage{lipsum,booktabs}
\usepackage{pifont}
\usepackage{xcolor}
\usepackage{soul}
\usepackage{balance}

\hypersetup{hidelinks}

\allowdisplaybreaks

\newcommand{\n}{n}

\newcommand{\cD}{\mathcal{D}}

\newcommand{\cF}{\mathcal{F}}

\newcommand{\cR}{\mathcal{R}}
\newcommand{\cS}{\mathcal{S}}

\newcommand{\R}{\mathbb{R}}
\newcommand{\N}{\mathbb{N}}

\newcommand{\x}{x}

\newcommand{\z}{z}

\newcommand{\X}{X}

\newcommand{\pr}{\delta}

\newcommand{\iter}{{t}}
\newcommand{\iterp}{{\iter + 1}}

\newcommand{\ud}{_}

\newcommand{\xt}{\x\ud{\iter}}

\newcommand{\zt}{\z\ud{\iter}}

\newcommand{\xtp}{\x\ud{\iterp}}

\newcommand{\ztp}{\z\ud{\iterp}}

\newcommand{\xstar}{\x\ud\star}

\newcommand{\ut}{u\ud{\iter}}

\newcommand{\utp}{u\ud{\iter+1}}

\newcommand{\bx}{\bar{\x}}
\newcommand{\bz}{\bar{\z}}

\newcommand{\str}{s}

\newcommand{\zeq}{\z\ud{\text{\tiny FIX}}}

\newcommand{\norm}[1]{\left \|#1 \right \|}

\newcommand{\distsimbol}{d}
\newcommand{\dist}[2]{\distsimbol\left (#1,#2 \right )}

\newcommand{\normf}[1]{\norm{#1}_{F}}
\newcommand{\norms}[1]{\norm{#1}_{S}}

\newcommand{\distfs}{\distsimbol_F}
\newcommand{\distss}{\distsimbol_S}

\newcommand{\distf}[2]{\distfs\left (#1,#2 \right )}
\newcommand{\dists}[2]{\distss\left (#1,#2 \right )}
\newcommand{\distssmall}[2]{\distss (#1,#2 )}

\newcommand{\T}{^\top}

\newcommand{\E}{\mathbb{E}}

\newcommand{\lip}{L}

\newcommand\oprocendsymbol{\hbox{$\square$}}
\newcommand\oprocend{\relax\ifmmode\else\unskip\hfill\fi\oprocendsymbol}

\def\er/{Erd\H{o}s-R\'enyi}

\def\algoext/{ADMM-Tracking Gradient}
\def\ralgoext/{Robust ADMM-Tracking Gradient}

\def\algo/{ATG}
\def\ralgo/{RATG}

\def\algogameext/{Primal TRacking-based Aggregative Distributed Equilibrium Seeking}
\def\algogame/{Primal TRADES}

\newcommand{\m}{m}

\newcommand{\step}{\gamma}

\newcommand{\rand}{r}
\newcommand{\rt}{\rand\ud{\iter}}

\newcommand{\nr}{p}

\newcommand{\as}{\text{ a. s.}}

\newcommand{\cost}{\ell}

\newcommand{\dyn}{f}

\newcommand{\fast}{\cF}
\newcommand{\slow}{\cS}

\newcommand{\red}{\cR}

\def\PL/{Polyak-\L{}ojasiewicz}

\newcommand{\xddots}{%
	\raise 4pt \hbox {.}
	\mkern 6mu
	\raise 1pt \hbox {.}
	\mkern 6mu
	\raise -2pt \hbox {.}
}

\graphicspath{{figs/}}

\def\er/{Erd\H{o}s-R\'enyi}

\newcommand{\cf}{c_F}

\newcommand{\cs}{c_S}
\newcommand{\ocs}{(1-\cs)}

\newcommand{\lips}{\lip_{\slow}}
 
\newcommand{\lipr}{\lip_{\red}} 
\newcommand{\lipeq}{\lip_{\text{\tiny FIX}}}

\newcommand{\Z}{Z}

\newcommand{\Xeq}{X_{\text{\tiny fix}}} 
\newcommand{\Zeq}{Z_{\text{\tiny FIX}}} 

\newcommand{\bzx}{\bz_{x}}

\newcommand{\bzxz}{\bz_{(\x,\z)}}

\newcommand{\bxx}{\bx_{\x}}

\newcommand{\reg}{\mu}

\newcommand{\op}{G}

\def\er/{Erd\H{o}s-R\'enyi}

\newcommand{\zss}{\z_{\text{ss}}}

\newcommand{\ustar}{u_{\star}}

\newcommand{\costzstar}{\cost^{\zss}_\star}

\newcommand{\xp}{\x\ud{+}}
\newcommand{\zp}{\z\ud{+}}

\newtheorem{theorem}{Theorem}
\newtheorem{proposition}{Proposition}
\newtheorem{corollary}{Corollary}

\newtheorem{lemma}{Lemma}

\newtheorem{assumption}{Assumption}
\newtheorem{remark}{Remark}

\title{Timescale Separation\\ Through the Lens of Operator Theory}

\author{Guido Carnevale,
Nicola Bastianello,  
Luca Schenato, 
Giuseppe Notarstefano, 
Ruggero Carli, 
\thanks{Work supported in part by Fondi PNRR - Bando PE - Progetto
  PE11 - 3A-ITALY, ``Made in Italy Circolare e Sostenibile'' - Codice
  PE0000004, CUP: J33C22002950001.}
\thanks{Guido Carnevale and Giuseppe Notarstefano are with the Department of Electrical,  Electronic and Information Engineering,  Alma Mater Studiorum - Universit\`a di Bologna,  Bologna, Italy. Emails: {\tt\footnotesize{name.surname@unibo.it}}.
}
	\thanks{Nicola Bastianello is with the School of Electrical Engineering and Computer Science, and Digital Futures, KTH Royal Institute of Technology, Stockholm, Sweden. Email: {\tt\footnotesize nicolba@kth.se}.}%
\thanks{Ruggero Carli and Luca Schenato are with the Department of Information Engineering of the University of Padova, Via G. Gradenigo 6/B, 35131 Padova, Italy. Email: {\tt\footnotesize{\{carlirug,schenato\}@dei.unipd.it}}.
}
}

\begin{document}

\maketitle
\thispagestyle{empty}

\begin{abstract}
  Timescale separation is a powerful tool for analyzing interconnected dynamical systems. 
  Meanwhile, operator theory provides a general framework for studying the convergence of iterative methods formulated as fixed-point iterations, including algorithms arising in optimization, learning, and control.
  In this paper, we bridge these two areas by establishing timescale separation results for fixed-point iterations induced by both deterministic and stochastic operators.
  As customary in timescale separation, our results involve auxiliary systems that arise from the original interconnection in the limit as the timescale parameter tends to zero and separately capture the dynamics induced by the slow and fast operators.
  The proposed operator-theoretic framework yields explicit and readily checkable bounds on this tunable parameter, expressed in terms of standard operator constants. %
  To illustrate the applicability of our results, we employ them to prove the convergence properties of a feedback optimization scheme in both deterministic and stochastic settings.
\end{abstract}

\section{Introduction}
\label{sec:intro}

Timescale separation (or singular perturbation) theory is a fundamental principle in the analysis and design of interconnected dynamical systems in the form 
\begin{subequations}\label{eq:SP_system}
  \begin{align}
    \xtp &= \xt + \pr\slow(\xt,\zt) 
    \label{eq:SP_system_slow}
    \\
    \ztp &= \fast(\xt,\zt),
    \label{eq:SP_system_fast}
  \end{align}
\end{subequations}
where $\xt \in \R^{\n}$ and $\zt \in \R^{\m}$ are the system states, $\pr > 0$ is a tuning parameter, with $\slow: \R^{\n} \times \R^{\m} \to \R^{\n}$ and $\fast: \R^{\n} \times \R^{\m} \to \R^{\m}$. %
With an eye to typical nomenclature in the field of singular perturbations, we refer to~\eqref{eq:SP_system_slow} and $\xt$ as the \emph{slow} system and state, respectively, and, analogously, to~\eqref{eq:SP_system_fast} and $\zt$ as the \emph{fast} system and state, respectively.
The reason behind this nomenclature is that, since $\pr$ is tunable, the changes in $\xt$ across iterations $\iter$ can be made arbitrarily small, whereas $\zt$ may still vary even in the limiting case $\pr \to 0$.
Fig.~\ref{fig:bd_interconnection} graphically depicts system~\eqref{eq:SP_system} through a block diagram.
\begin{figure}[H]
  \centering
  \includegraphics{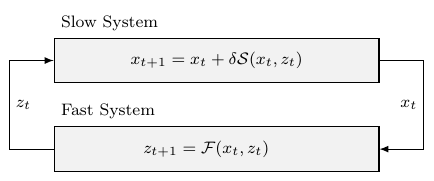}
  \caption{Graphical representation of system~\eqref{eq:SP_system}.}
  \label{fig:bd_interconnection}
\end{figure}
This peculiar class of systems can often be conveniently analyzed through two auxiliary descriptions: a boundary layer subsystem, obtained by freezing the slow variables, and a reduced subsystem, obtained by replacing in the slow dynamics the fast variables with their limiting response.
This idea lies at the core of singular perturbation theory and has played a central role in control, see the seminal works~\cite{kokotovic1968singular,sannuti1969near,kokotovic1976singular} and the comprehensive surveys~\cite{kokotovic1999singular,abdelgalil2023multi}.
Unlike the continuous-time setting, where timescale separation (or singular perturbation) theory is well established, comparatively few results are available for discrete-time systems, despite the fact that many relevant applications are naturally modeled or implemented in discrete time. 
Most existing contributions rely on Lyapunov-based arguments, or more generally on stability-theoretic tools~\cite{bai1988averaging,bouyekhf1997analysis,teel2003unified,kurina2017discrete}. 
Although these approaches provide valuable qualitative guarantees, they often yield purely existential conditions or highly conservative bounds whose complex expressions make them difficult to verify or exploit in practice.
The situation is even more limited in stochastic settings, for which only a handful of discrete-time timescale-separation results are currently available, with~\cite{carnevale2024timescale} being one of the few examples. 
The results in~\cite{carnevale2024timescale}, however, are again primarily stability-based and therefore inherit the same limitations in terms of conservatism, interpretability, and practical applicability.

In parallel, operator-theoretic methods have become a standard language for studying discrete-time iterative algorithms in optimization, learning, signal processing, and control~\cite{lohmiller1998contraction,aminzare2014contraction,yi2019operator,belgioioso2021semi,franci2021stochastic,belgioioso2022distributed,giaccagli2024synchronization,davydov2024perspectives,bianchi2024end,fabiani2026finite}. 
Indeed, the update law of many algorithms of interest can be naturally represented as a fixed-point iteration associated with an operator satisfying structural properties such as contractivity, nonexpansiveness, averagedness, paracontractivity, or metric subregularity~\cite{ryu2016primer,bauschke2017convex,nguyen2018contraction,themelis2019supermann,BASTIANELLO2026630,centorrino2024weakly,davydov2024non,davydov2025time}. 
This point of view is particularly useful because it separates the convergence mechanism from the specific algebraic form of the algorithm, thereby enabling unified analyses across apparently different schemes.

While operator-theoretic methods and continuous-time timescale-separation theory are both well developed, their combination in the analysis of interconnected discrete-time fixed-point iterations remains largely unexplored. 
This gap calls for a general operator-theoretic theory of timescale separation in discrete time, capable of yielding explicit, interpretable, and practically verifiable conditions for deterministic and stochastic operator interconnections. 
Such a theory would be directly relevant to a broad range of algorithmic architectures arising in optimization, learning, control, and networked systems, as illustrated by the following representative examples.
In feedback optimization, an optimization variable is updated using measurements generated by a physical plant that only asymptotically tracks its steady-state response~\cite{colombino2019online,hauswirth2020timescale,hauswirth2021optimization,carnevale2023nonconvex}. 
In suboptimal model predictive control, the optimization problem is solved iteratively while the plant evolves in closed loop with the current suboptimal solution~\cite{zanelli2021lyapunov,chen2025sampled,di2026suboptimal}.
In distributed optimization and equilibrium-seeking algorithms, local decision variables are updated through optimization steps, while auxiliary consensus or tracking variables evolve concurrently through communication protocols~\cite{carnevale2023admm,carnevale2024tracking,carnevale2024unifying}. 
In all these cases, the overall scheme can naturally be viewed as the interconnection of a slow operator and a fast operator, where the fixed-point set of the latter depends on the current value of the slow state.
Moreover, in more complex settings, coordination and optimization updates may be affected by asynchronous updates, packet losses, or quantized communications~\cite{bastianello2020asynchronous,bastianello2022admm,carnevale2025modular}. 
All these phenomena can be conveniently modeled through stochastic operators, which motivates the need for a stochastic extension of timescale separation results.

This paper develops a timescale-separation theory for interconnected operator dynamics. 
We consider discrete-time systems in which the slow state is updated through an operator scaled by a tunable parameter, while the fast state evolves according to an operator whose fixed-point manifold is parametrized by the slow state. 
The reduced dynamics is obtained by constraining the fast state to this manifold. 
The main question addressed in the paper is: under which conditions on the fast operator and the reduced slow operator does the original interconnection converge for sufficiently small values of the timescale parameter?
Our operator-theoretic formulation provides a constructive answer to this question by yielding explicit bounds on the timescale parameter that are readily checkable from the operator constants appearing in the assumptions.

The main theoretical results of the paper are as follows.
First, we establish a deterministic timescale-separation result for the case in which the reduced operator is contractive. Under a paracontraction property of the fast operator with respect to its fixed-point manifold, Lipschitz regularity of the interconnection, and contractivity of the reduced dynamics, we prove linear convergence of the full interconnected system. 
The proof yields an explicit and readily checkable upper bound on the admissible timescale parameter, expressed in terms of the contraction rates and interconnection gains appearing in the assumptions.
Second, we extend the deterministic analysis beyond contractive reduced dynamics. Specifically, we consider the case in which the reduced operator is averaged and metrically subregular. This setting covers a broader class of fixed-point iterations commonly arising in operator-splitting and optimization algorithms. We show that averagedness together with metric subregularity implies a linear decrease of the distance to the fixed-point set of the reduced operator, and we use this property to establish linear convergence of the original interconnected dynamics.
Third, we develop a stochastic extension of the framework. We consider random slow and fast operators driven by an independent stochastic process and derive sufficient conditions ensuring almost sure convergence of the interconnected system. The assumptions are formulated in terms of expected contraction-type properties of the fast operator and of the reduced stochastic operator. This result allows the framework to cover algorithms affected by random updates, asynchronous activations, or packet losses.
In contrast with many qualitative singular-perturbation formulations, the proposed operator-theoretic analysis yields directly computable sufficient bounds in terms of standard operator constants.
Their applicability is illustrated through feedback optimization schemes in both deterministic and stochastic settings.
Overall, the paper provides a compositional operator-theoretic principle for timescale separation: convergence of the full interconnection is obtained by combining convergence of the fast dynamics toward a parametrized fixed-point set, convergence of the reduced iteration, and explicit bounds on the coupling between the two dynamics.
In this sense, the proposed framework complements classical singular-perturbation and two-timescale analyses by addressing fixed-point iterations and operator-based algorithms directly.

The work most closely related to ours is~\cite{cothren2023online}, where contraction theory and singular-perturbation methods are combined to study continuous-time interconnected systems.
However, the results in~\cite{cothren2023online} do not directly extend to discrete-time systems.
Moreover, our framework goes beyond contractive mappings by covering averaged operators and stochastic operator dynamics.

A preliminary, shorter version of this work appeared in~\cite{carnevale2026contraction}.
The present paper substantially extends the preliminary results in several directions.
First, it provides a more general deterministic analysis that also covers scenarios in which the reduced operators are not contractive.
Second, it develops a stochastic extension of the proposed framework.
These theoretical developments are also reflected in the numerical applications, which now include both a noncontractive scenario and a stochastic setting.
Finally, the present paper provides complete proofs of all the main results, which were omitted from~\cite{carnevale2026contraction}.

The paper is organized as follows. Section \ref{sec:setup} introduces the operator interconnection framework and the standing assumptions. Sections \ref{sec:Contractive} and \ref{sec:AveReduced} present the deterministic convergence results for contractive and averaged reduced operators, respectively. Section \ref{sec:stochastic_case} develops the stochastic extension.
Finally, in Section~\ref{sec:feedback_optimization}, we apply our results to the framework of feedback optimization. %
Conclusions are drawn in Section~\ref{sec:conclusions}.
The proofs of the main results are provided in the appendices.

\paragraph*{Notation}

A square matrix $M \in \R^{n\times n}$ is said to be Schur if all its eigenvalues lie in the open unit circle.
The identity matrix in $\R^{m\times m}$ is $I_m$.
For vectors of any compatible dimension, we denote by $\norm{\cdot}$ and $\dist{\cdot}{\cdot}$ the Euclidean norm and distance, respectively.

\section{Technical Assumptions and Problem Formulation}
\label{sec:setup}

In this paper, we study the convergence properties of the interconnected system~\eqref{eq:SP_system} through an approach that bridges the perspectives of timescale separation and operator theory.
Combining the terminology of these two areas, we refer to $\slow$ and $\fast$ as the \emph{slow} and \emph{fast} operators, respectively.
We now state the standing assumptions used in the deterministic analysis. 
The first one simply ensures that the dynamics remain within prescribed forward-invariant sets $\X$ and $\Z$.
\begin{assumption}[Forward Invariance]\label{ass:forward_invariance}
  There exist closed sets $\X \subseteq \R^{\n}$ and $\Z \subseteq \R^{\m}$ such that
  \begin{align}
    \x + \pr\slow(\x,\z) \in \X, \quad \fast(\x,\z) \in \Z,
  \end{align}
  for all $\x \in \X$, $\z \in \Z$, and $\pr \in [0,1]$.\oprocend
\end{assumption}
We highlight that no boundedness of $\X$ and $\Z$ is required. 
In particular, Assumption~\ref{ass:forward_invariance} is trivially satisfied with $\X=\R^{\n}$ and $\Z= \R^{\m}$ whenever it is not needed to restrict the operators to specific domains.

Before proceeding, let us introduce two norms $\normf{\cdot}$ and $\norms{\cdot}$ induced by inner products on $\R^m$ and $\R^n$, respectively, and define the corresponding distances $\distfs: \R^m \times \R^m \to \R$ and $\distss: \R^n \times \R^n \to \R$ as 
\begin{align*}
  \distf{\z}{\z^\prime} &:= \normf{\z - \z^\prime}
  \\
  \dists{\x}{\x^\prime} &:= \norms{\x - \x^\prime}.
\end{align*}
Moreover, given $\Z^\prime \subseteq \Z$, $X^\prime \subseteq \X$, $\z \in \Z$, and $\x \in \X$, we define the distance of $\z$ from $Z^\prime$ and of $\x$ from $X^\prime$ as 
\begin{align*}
  \distf{\z}{Z^\prime} &:= \inf_{y \in Z^\prime}\distf{\z}{y}
  \\
  \dists{\x}{X^\prime} &:= \inf_{y \in X^\prime}\dists{\x}{y}.
\end{align*}

The key structural feature of the interconnection is that, for each fixed value of the slow state $\x$, the fast operator $\fast$ admits a fixed-point set $\Zeq(\x)$. %
This set contains the limiting response of the fast dynamics when the slow variable is held constant.
\begin{assumption}[Fixed-point manifold of the fast operator]\label{ass:set_fix_parametrized}
  There exists a set-valued map $\Zeq: \X \rightrightarrows \Z$ such that
  \begin{align}
    \bar{\z} = \fast(\x,\bar{\z}),
  \end{align} 
  for all $\x \in \X$ and $\bar{\z} \in \Zeq(\x)$.
  Moreover, $\Zeq(\x)$ is nonempty and closed for all $\x \in \X$. 
  Further, there exists $\lipeq > 0$ such that 
  \begin{align}\label{eq:lipschitz_eq}
   \distf{\z}{\Zeq(x^\prime)} \leq \lipeq\dists{\x}{\x^\prime},
  \end{align}
  for all $\x, \x^\prime \in \X$ and $\z \in \Zeq(\x)$.\oprocend
\end{assumption}
Assumption \ref{ass:set_fix_parametrized} states that the fixed-point set of the fast operator is well defined for every frozen slow state and varies Lipschitz continuously with that state. The constant $\lipeq$ quantifies how much the target set of the fast dynamics moves when the slow state changes. 

The following assumption ensures that the fast operator $\fast$ is \emph{paracontractive} (see, e.g.,~\cite[Def.~4]{deplano2025optimization}) within $\Z$.
We note that paracontractivity is weaker than contractivity, as it only requires the distance from any fixed point to strictly decrease, rather than the distance between arbitrary pairs of points.
If the set $\Zeq(\x) = \{\zeq(\x)\}$ is a singleton for every $\x \in \X$, then paracontractivity reduces to contractivity with respect to the unique fixed point $\zeq(\x)$.
\begin{assumption}[Uniform contraction toward the fast fixed-point set] 
\label{ass:fast_paracontractive} 
  There exists $\cf \in (0,1)$ such that 
  \begin{align}\label{eq:paracontraction}
    \distf{\fast(\x,\z)}{\Zeq(\x)} \leq \cf\distf{\z}{\Zeq(\x)}, 
  \end{align}
  for all $(\x,\z) \in \X\times \Z$.\oprocend
\end{assumption}
This assumption represents the counterpart, in our framework, of the standard stability assumption imposed in the singular perturbation literature on the so-called \emph{boundary-layer system}.
Indeed, inequality~\eqref{eq:paracontraction} is formulated for an arbitrarily fixed value of the slow state $\x$.
Accordingly, Assumption~\ref{ass:fast_paracontractive} can be interpreted as characterizing the properties of the fast dynamics $\fast$ when $\pr=0$ in~\eqref{eq:SP_system}.
This trivially yields $\xtp=\xt$ and leads to the auxiliary system depicted in Fig.~\ref{fig:bd_bl}.
\begin{figure}[H]
  \centering
  \includegraphics{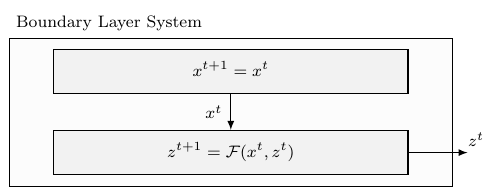}
  \caption{Graphical representation of the boundary layer system (characterized in Assumption~\ref{ass:fast_paracontractive}) associated to the interconnected system~\eqref{eq:SP_system}.}
  \label{fig:bd_bl}
\end{figure}
We now focus on the ideal limiting case in which subsystem~\eqref{eq:SP_system_fast} is infinitely faster than subsystem~\eqref{eq:SP_system_slow} and, thus, its state $\zt$ is always in the fixed-point set $\Zeq(\xt)$ associated to the current slow state $\xt$.
The resulting dynamics is typically referred to as the \emph{reduced system} in the singular perturbation literature.
In particular, we impose the following assumption to ensure the existence of a \emph{reduced} operator $\red(\x)$ that fully describes $\slow(\x, \z)$ when $\z \in \Zeq(\x)$, namely, when the fast state lies in the equilibrium manifold $\Zeq(\x)$ associated to the slow state $\x$.
\begin{assumption}[Reduced operator]\label{ass:reduced_operator}
  There exists $\red: \X \to \R^{\n}$ such that
  \begin{align}
    \red(\x) = \slow(\x,\z),
  \end{align}
  for all $(\x,\z) \in \{(\x,\z) \in \X \times \Z \mid \x \in \X, \z \in \Zeq(x)\}$.\oprocend
\end{assumption}
Based on the above definition, we can introduce the so-called \emph{reduced} system associated to~\eqref{eq:SP_system}, that is
\begin{align}\label{eq:reduced_system}
  \xtp  = \xt + \red(\xt).
\end{align}
We note that the tunable parameter $\pr$ does not appear in the reduced system~\eqref{eq:reduced_system}, which is equivalent to setting $\pr=1$.
The underlying intuition is that, in the ideal limiting regime in which $\zt \in \Zeq(\xt)$ for every $\iter \in \N$, there is no need to slow down the evolution of $\xt$ to ensure convergence.
We graphically depict the reduced system~\eqref{eq:reduced_system} in Fig.~\ref{fig:bd_reduced}.
\begin{figure}[H]
  \centering
  \includegraphics{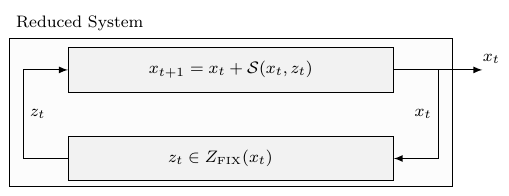}
  \caption{Graphical representation of the reduced system~\eqref{eq:reduced_system} associated to the interconnected system~\eqref{eq:SP_system}.}
  \label{fig:bd_reduced}
\end{figure}
Finally, we impose the following Lipschitz continuity conditions.
\begin{assumption}[Lipschitz continuity]\label{ass:lip}
  There exist $\lips, \lipr > 0$ such that
  \begin{align*}
    \norms{\slow(\x,\z) - \slow(\x,\z')} &\leq \lips\normf{\z - \z'}
    \\
    \norms{\red(\x) - \red(\x')} &\leq \lipr \norms{\x - \x'},
  \end{align*}
  for all $(\x,\z), (\x',\z') \in \X \times \Z$.
  \oprocend
\end{assumption}
The constants $\lips$ and $\lipr$ quantify the coupling between the fast transient and the slow update, and the sensitivity of the reduced slow operator. 

The goal of this paper is to analyze the convergence properties of the interconnected dynamical system in \eqref{eq:SP_system} under different assumptions on the reduced operator $\x+\red(\x)$. 
In particular, in Section \ref{sec:Contractive}, we assume $\x+\red(\x)$ to be contractive, whereas in Section \ref{sec:AveReduced}, we assume $\x+\red(\x)$ to be averaged and metrically subregular.
In both cases, we establish conditions on $\delta$ guaranteeing that the state trajectory converges to a fixed-point set. 
Finally, in Section \ref{sec:stochastic_case}, we extend our results to stochastic scenarios.

\section{Deterministic Framework: Contractive Case} \label{sec:Contractive}

We first consider the case in which the reduced dynamics~\eqref{eq:reduced_system} is contractive. 
This setting yields the cleanest form of timescale separation result and illustrates the main proof mechanism. 
\begin{assumption}[Reduced system contractive]\label{ass:slow_contractive}
  There exists $\cs \in (0,1)$ such that
  \begin{align}\label{eq:slow_paracontractive}
    \dists{\x + \red(\x)}{\x^\prime+\red(\x^\prime)} \leq \ocs\dists{\x}{\x^\prime},
  \end{align}
  for all $\x, \x^\prime \in \X$. 
  \oprocend
\end{assumption}
We remark that Assumption~\ref{ass:slow_contractive} implies that $x + \red(x)$ has a unique fixed point and we denote it by $\xstar \in \X$.

The next theorem provides a bound on $\pr$ that guarantees linear convergence of~\eqref{eq:SP_system} toward the set $\{(\x,\z)\in \X \times \Z \mid \x = \xstar, \z \in \Zeq(\xstar)\}$.
\begin{theorem}[Timescale separation I]\label{th:contraction}
  Consider~\eqref{eq:SP_system} and let Assumptions~\ref{ass:forward_invariance},~\ref{ass:set_fix_parametrized},~\ref{ass:fast_paracontractive},~\ref{ass:reduced_operator},~\ref{ass:lip} and~\ref{ass:slow_contractive} hold.
  Define
  \begin{align}\label{eq:bar_pr}
    \bar{\pr} := \frac{\cs(1 -\cf)}{\lipeq\lips(\cs + \lipr)}.
  \end{align}
  Then, for every $\pr \in (0,\min\{\bar{\pr},1\})$, there exist $C > 0$ and $\rho \in (0,1)$ such that the trajectories of~\eqref{eq:SP_system} satisfy
  \begin{align}
  \left\|
       \begin{bmatrix}
          \dists{\xt}{\xstar}
          \\
          \distf{\zt}{\Zeq(\xt)}
        \end{bmatrix}
        \right\|
        \leq 
        C \rho^\iter
          \left\|\begin{bmatrix}
          \dists{\x_0}{\xstar}
          \\
          \distf{\z_0}{\Zeq(\x_0)}
        \end{bmatrix}\right\|,\label{eq:linear_rate}
  \end{align}
  for all initial conditions $(\x\ud0,\z\ud0) \in \X \times \Z$ and $\iter \in \N$. In particular, $\xt \to \xstar$ and $\distf{\zt} {\Zeq(\xt)} \to 0$ linearly. \oprocend
\end{theorem}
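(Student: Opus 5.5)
Define the error coordinates $a_\iter := \dists{\xt}{\xstar}$ and $b_\iter := \distf{\zt}{\Zeq(\xt)}$. The plan is to derive a linear comparison system
\begin{align*}
  \begin{bmatrix} a_{\iterp} \\ b_{\iterp}\end{bmatrix} \leq M(\pr) \begin{bmatrix} a_{\iter} \\ b_{\iter}\end{bmatrix},
  \qquad
  M(\pr) := \begin{bmatrix} 1-\pr\cs & \pr\lips \\ \lipeq\pr\lipr & \cf + \lipeq\pr\lips\end{bmatrix},
\end{align*}
componentwise. We then show that $M(\pr)$ is a nonnegative Schur matrix exactly when $\pr<\bar\pr$. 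Since $M(\pr)$ is nonnegative, the inequality iterates to $(a_\iter,b_\iter)\leq M(\pr)^\iter(a_0,b_0)$. The bound $\norm{M^\iter}\leq C\rho^\iter$, with any $\rho$ between the spectral radius and $1$, then gives~\eqref{eq:linear_rate}. Forward invariance (Assumption~\ref{ass:forward_invariance}) keeps all iterates in $\X\times\Z$ for $\pr\leq 1$, so every assumption can be applied along the trajectory.

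\emph{Slow bound.} Since $\Zeq(\xt)$ is closed and nonempty and $\normf{\cdot}$ comes from an inner product, a projection $\bz_\iter\in\Zeq(\xt)$ with $\normf{\zt-\bz_\iter}=b_\iter$ exists. By Assumption~\ref{ass:reduced_operator}, $\slow(\xt,\bz_\iter)=\red(\xt)$, and $\red(\xstar)=0$. Write
\begin{align*}
  \xtp-\xstar = (1-\pr)(\xt-\xstar) + \pr\big(\xt+\red(\xt)-\xstar-\red(\xstar)\big) + \pr\big(\slow(\xt,\zt)-\slow(\xt,\bz_\iter)\big).
\end{align*}
The triangle inequality, Assumption~\ref{ass:slow_contractive} and Assumption~\ref{ass:lip} then give $a_\iterp\leq(1-\pr+\pr(1-\cs))a_\iter+\pr\lips b_\iter=(1-\pr\cs)a_\iter+\pr\lips b_\iter$.

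\emph{Fast bound.} By the triangle inequality for set distances and the Lipschitz property of the manifold,
\begin{align*}
  b_\iterp\leq \distf{\ztp}{\Zeq(\xt)} + \sup_{\z\in\Zeq(\xt)}\distf{\z}{\Zeq(\xtp)}.
\end{align*}
To justify this, take $w$ in $\Zeq(\xt)$ nearly closest to $\ztp$ and apply~\eqref{eq:lipschitz_eq} at $w$. The first term is at most $\cf b_\iter$ by Assumption~\ref{ass:fast_paracontractive}. The second is at most $\lipeq\norms{\xtp-\xt}=\lipeq\pr\norms{\slow(\xt,\zt)}$. We bound $\norms{\slow(\xt,\zt)}\leq\norms{\red(\xt)-\red(\xstar)}+\lips b_\iter\leq\lipr a_\iter+\lips b_\iter$. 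This yields the second row of $M(\pr)$.

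\emph{Schur property, the main obstacle.} This step is where the specific value of $\bar\pr$ must come out. For a nonnegative $2\times2$ matrix, Schurness is equivalent to both diagonal entries being less than $1$ together with $(1-M_{11})(1-M_{22})>M_{12}M_{21}$. The first diagonal entry is fine, and the second needs $\pr<(1-\cf)/(\lipeq\lips)$. The determinant condition reads $\pr\cs(1-\cf-\lipeq\pr\lips)>\pr^2\lips\lipeq\lipr$. Dividing by $\pr$, this becomes $\cs(1-\cf)>\pr\lipeq\lips(\cs+\lipr)$, i.e.\ $\pr<\bar\pr$. This also implies the diagonal condition, since $\cs/(\cs+\lipr)<1$. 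For the final estimate I will use the Euclidean norm on $(a,b)$ and take $C$ as the constant from Gelfand's formula, so $C$ and $\rho$ depend only on $\pr$ and the operator constants.

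The fast recursion is the delicate part, because paracontraction is only stated relative to $\Zeq(\xt)$, not $\Zeq(\xtp)$. I must check that~\eqref{eq:lipschitz_eq} gives the set-to-set shift uniformly via an approximate-minimizer argument; this avoids needing attainment of the infimum in the second step.
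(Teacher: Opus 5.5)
Your proposal is correct and follows essentially the same route as the paper. The two one-step bounds match. For the slow error, you insert the closest point so that $\red(\x)=\slow(\x,\bzx)$ and split with weights $(1-\pr)$ and $\pr$. For the fast error, you use the triangle inequality through $\Zeq(\xt)$ together with the Lipschitz property of $\Zeq$. These give the identical comparison matrix $M(\pr)$, and your $2\times 2$ Schur criterion is exactly the positivity of the principal minors of $I_2-M(\pr)$ that the paper invokes through M-matrix theory. Your approximate-minimizer argument and the explicit monotonicity/Gelfand step for obtaining $C$ and $\rho$ are minor refinements. They are not needed, since the infima are attained for closed nonempty sets in finite dimension, and they do not change the argument.
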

The proof of Theorem~\ref{th:contraction} is provided in Appendix~\ref{sec:proof_th_contraction}.

\begin{remark}
[Interpretation of the bound] The admissible upper bound~\eqref{eq:bar_pr} on $\pr$ decreases when the fast fixed-point manifold is more sensitive to the slow state (i.e., the larger $\lipeq$ is), when the slow operator is more sensitive to fast-state errors (i.e., the larger $\lips$ is), or when the reduced operator is more sensitive to changes in the slow state (i.e., the larger $\lipr$ is). Conversely, larger contraction margins of the fast and reduced dynamics (i.e., the larger $\cs$ and $1-\cf$ are) allow larger values of $\pr$.
\end{remark}

\section{Deterministic Framework: Averaged and Metrically Subregular Case}
\label{sec:AveReduced}

The contractivity requirement in Assumption \ref{ass:slow_contractive} is often too strong for algorithms arising from operator splitting or projected fixed-point iterations. We therefore consider the case in which the reduced operator is averaged and metrically subregular. This combination is sufficient to recover a linear decrease of the distance to the fixed-point set, which plays the same role as contractivity in the interconnection analysis. 
To properly state this assumption, let $$\Xeq := \{\x \in \X \mid \red(\x) = 0\}$$ 
denote the set of fixed points of the reduced operator.
\begin{assumption}[Reduced system averaged and metrically subregular]\label{ass:averaged_and_metric_subregular}
  The set $\Xeq$ is nonempty and closed. 
  Further, the operator $\x + \red(\x)$ is $\alpha$-averaged and $\reg$-metric subregular within $\X$ and with respect to $\norms{\cdot}$, for some $\alpha \in (0,1)$ and $\reg \ge 1$, namely 
  \begin{subequations}
    \begin{align}
      \dists{\x + \frac{1}{\alpha}\red(\x)}{\x^\prime + \frac{1}{\alpha}\red(\x^\prime)} &\leq\norms{\x - \x^\prime}
      \label{eq:non_expansive}
      \\
      \dists{\x}{\Xeq} &\leq \reg\norms{\red(\x)},
      \label{eq:metric_subregularity}
    \end{align}
  \end{subequations}
  for all $\x, \x^\prime \in \X$. \oprocend
\end{assumption}

The next lemma makes explicit the contraction-like property implied by Assumption \ref{ass:averaged_and_metric_subregular}.
\begin{lemma}[Reduced system linear convergence]\label{lemma:linear_convergence_reduced_operator}
  Let Assumption~\ref{ass:averaged_and_metric_subregular} hold.
  Then, with $\cs := 1 - \sqrt{1 - \tfrac{1 - \alpha}{\alpha\reg^2}}$, it holds
  \begin{align}\label{eq:linear_convergence_without_contraction}
    \dists{\x + \red(\x)}{\bxx} \leq (1 - \cs)\dists{\x}{\bxx},
  \end{align}
  for all $\x \in \X$ and $\bxx \in \arg\inf_{y \in \Xeq}\dists{\x}{y}$. \oprocend
\end{lemma}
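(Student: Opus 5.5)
The plan is the standard averaged-operator argument: derive a ``sufficient decrease'' inequality, then turn it into a linear rate using metric subregularity. Write $T(\x) := \x + \red(\x)$ and $N(\x) := \x + \tfrac{1}{\alpha}\red(\x)$, so that $T = (1-\alpha)\mathrm{Id} + \alpha N$. By~\eqref{eq:non_expansive}, $N$ is nonexpansive on $\X$ with respect to $\norms{\cdot}$. Every $y \in \Xeq$ satisfies $\red(y) = 0$, hence $N(y) = y$.

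Fix $\x \in \X$ and $\bxx \in \arg\inf_{y\in\Xeq}\dists{\x}{y}$; this minimizer exists because $\Xeq$ is nonempty and closed. Since $\norms{\cdot}$ is induced by an inner product, the identity $\norms{(1-\alpha)a+\alpha b}^2 = (1-\alpha)\norms{a}^2+\alpha\norms{b}^2-\alpha(1-\alpha)\norms{a-b}^2$ holds. Apply it with $a = \x-\bxx$ and $b = N(\x)-\bxx = N(\x)-N(\bxx)$. Nonexpansiveness gives $\norms{b}\le\norms{a}$, and $a-b = \x - N(\x) = -\tfrac{1}{\alpha}\red(\x)$. Together these yield
\begin{align*}
\dists{T(\x)}{\bxx}^2 \le \dists{\x}{\bxx}^2 - \tfrac{1-\alpha}{\alpha}\norms{\red(\x)}^2 .
\end{align*}

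Next, invoke~\eqref{eq:metric_subregularity}. Because $\bxx$ is a nearest point, $\dists{\x}{\bxx} = \dists{\x}{\Xeq} \le \reg\norms{\red(\x)}$, so $\norms{\red(\x)}^2 \ge \dists{\x}{\bxx}^2/\reg^2$. Substituting this gives
\begin{align*}
\dists{T(\x)}{\bxx}^2 \le \Big(1-\tfrac{1-\alpha}{\alpha\reg^2}\Big)\dists{\x}{\bxx}^2 .
\end{align*}
Taking square roots produces~\eqref{eq:linear_convergence_without_contraction} with $1-\cs = \sqrt{1-\tfrac{1-\alpha}{\alpha\reg^2}}$, which is exactly the stated $\cs$.

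The main subtlety is checking that this constant is well defined and lies in $(0,1)$, i.e., that $0 \le 1-\tfrac{1-\alpha}{\alpha\reg^2} < 1$. The upper bound is immediate because $\alpha<1$. Nonnegativity of the left-hand side already forces the radicand to be nonnegative whenever some $\x \notin \Xeq$ exists. Without relying on that, one checks that $\tfrac{1-\alpha}{\alpha}\norms{\red(\x)}^2 = \alpha(1-\alpha)\norms{a-b}^2 \le \dists{\x}{\bxx}^2$ by the same identity, which yields $\tfrac{1-\alpha}{\alpha\reg^2}\le 1$ from the two-sided comparison. In the degenerate case $\x \in \Xeq$ the inequality is trivial, since then $\bxx = \x$. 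The only other care needed is to use the inner-product structure of $\norms{\cdot}$ when invoking the averaging identity.
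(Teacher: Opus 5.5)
Your proof is correct and follows the paper's argument essentially step by step. The paper likewise applies the averaging identity to $\x+\red(\x)=(1-\alpha)\x+\alpha\op(\x)$, uses $\op(\bxx)=\bxx$ and nonexpansiveness to obtain the sufficient-decrease bound, and then concludes with metric subregularity and a square root.

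There is one small issue in your well-definedness aside. The two-sided comparison still requires dividing by $\norms{\red(\x)}^2$, so, like your first argument, it needs some $\x$ with $\red(\x)\neq 0$ and does not remove that reliance. If $\X=\Xeq$, the radicand can be negative (e.g.\ $\alpha<1/2$, $\reg=1$), although the inequality itself is then trivial. Also, a zero radicand gives $\cs=1$, so your condition $0\le 1-\tfrac{1-\alpha}{\alpha\reg^2}<1$ guarantees $\cs\in(0,1]$ rather than $(0,1)$.
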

The proof of Lemma~\ref{lemma:linear_convergence_reduced_operator} is provided in Appendix~\ref{sec:proof_lemma_linear_convergence_reduced_operator}.
With Lemma~\ref{lemma:linear_convergence_reduced_operator} at hand, we are ready to establish the convergence properties of the interconnected system~\eqref{eq:SP_system} for averaged and metrically subregular reduced dynamics.
\begin{theorem}[Timescale separation II]\label{th:linear_convergence_without_contraction}
  Consider~\eqref{eq:SP_system} and let Assumptions~\ref{ass:forward_invariance},~\ref{ass:set_fix_parametrized},~\ref{ass:fast_paracontractive},~\ref{ass:reduced_operator},~\ref{ass:lip} and~\ref{ass:averaged_and_metric_subregular} hold. Let $\cs$ be as in Lemma \ref{lemma:linear_convergence_reduced_operator}.
  Define
  \begin{align}\label{eq:bar_pr_ave}
    \bar{\pr} := \frac{\cs(1 -\cf)}{\lipeq\lips(\cs + \lipr)}.
  \end{align}
  Then, for all $\pr \in (0,\min\{\bar{\pr},1\})$, there exist $C > 0$ and $\rho \in (0,1)$ such that the trajectories of~\eqref{eq:SP_system} satisfy
  \begin{align*}
  \left\|
       \begin{bmatrix}
          \dists{\xt}{\Xeq}
          \\
          \distf{\zt}{\Zeq(\xt)}
        \end{bmatrix}\right\|
        \leq 
        C \rho^\iter \left\|
          \begin{bmatrix}
          \dists{\x_0}{\Xeq}
          \\
          \distf{\z_0}{\Zeq(\x_0)}
        \end{bmatrix}\right\|,
  \end{align*}
  for all $(\x\ud0,\z\ud0) \in \X \times \Z$ and $\iter \in \N$. \oprocend
\end{theorem}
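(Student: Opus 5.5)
We mirror the two-dimensional comparison argument behind Theorem~\ref{th:contraction}, using Lemma~\ref{lemma:linear_convergence_reduced_operator} in place of contractivity. We set $v_\iter := \dists{\xt}{\Xeq}$ and $w_\iter := \distf{\zt}{\Zeq(\xt)}$. The goal is a componentwise linear inequality
\begin{align*}
\begin{bmatrix} v_{\iter+1}\\ w_{\iter+1}\end{bmatrix} \leq M(\pr) \begin{bmatrix} v_\iter\\ w_\iter\end{bmatrix}
\end{align*}
with a nonnegative matrix $M(\pr)$ that is Schur exactly when $\pr<\bar\pr$. Iterating this inequality then gives the claim, with $\rho$ the spectral radius of $M(\pr)$ plus any margin, and $C$ obtained from the bound on $\norm{M(\pr)^\iter}$.

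\emph{Slow bound.} Since $\Xeq$ is closed and the norm comes from an inner product, the point $\bxx\in\arg\inf_{y\in\Xeq}\dists{\xt}{y}$ exists; we fix it with $\x=\xt$. We pick $\bz\in\Zeq(\xt)$ attaining $w_\iter$, which exists because $\Zeq(\xt)$ is closed. Then we write
\begin{align*}
\xtp = (1-\pr)\xt + \pr(\xt+\red(\xt)) + \pr(\slow(\xt,\zt)-\slow(\xt,\bz)),
\end{align*}
using Assumption~\ref{ass:reduced_operator}. We bound $v_{\iter+1}\le\dists{\xtp}{\bxx}$ by the triangle inequality and apply Lemma~\ref{lemma:linear_convergence_reduced_operator} together with Assumption~\ref{ass:lip}. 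This gives
\begin{align*}
v_{\iter+1}\le (1-\pr\cs)v_\iter + \pr\lips w_\iter .
\end{align*}
Note that the convex-combination step needs only $\dists{\xt}{\bxx}=v_\iter$, so it does not require contractivity between arbitrary pairs of points.

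\emph{Fast bound.} We use the triangle inequality for distances to sets:
\begin{align*}
w_{\iter+1}\le \distf{\ztp}{\Zeq(\xt)} + \sup_{y\in\Zeq(\xt)}\text{-type term}.
\end{align*}
Concretely, we take $y\in\Zeq(\xt)$ closest to $\ztp$. Then $\distf{\ztp}{\Zeq(\xtp)}\le \distf{\ztp}{y}+\distf{y}{\Zeq(\xtp)} \le \cf w_\iter + \lipeq\dists{\xt}{\xtp}$, by Assumptions~\ref{ass:fast_paracontractive} and~\ref{ass:set_fix_parametrized}. Next we bound $\dists{\xt}{\xtp}=\pr\norms{\slow(\xt,\zt)}\le \pr(\norms{\red(\xt)}+\lips w_\iter)$. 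Since $\red(\bxx)=0$, Assumption~\ref{ass:lip} gives $\norms{\red(\xt)}\le\lipr v_\iter$. Hence
\begin{align*}
w_{\iter+1}\le \pr\lipeq\lipr v_\iter + (\cf+\pr\lipeq\lips)w_\iter .
\end{align*}

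\emph{Schur condition.} We now have
\begin{align*}
M(\pr)=\begin{bmatrix}1-\pr\cs & \pr\lips\\ \pr\lipeq\lipr & \cf+\pr\lipeq\lips\end{bmatrix},
\end{align*}
which is nonnegative for $\pr\in(0,1)$. For a nonnegative $2\times2$ matrix with diagonal entries below one, it is Schur iff $(1-m_{11})(1-m_{22})>m_{12}m_{21}$. This reads
\begin{align*}
\pr\cs(1-\cf-\pr\lipeq\lips)>\pr^2\lips\lipeq\lipr,
\end{align*}
that is, $\cs(1-\cf)>\pr\lipeq\lips(\cs+\lipr)$, which is exactly $\pr<\bar\pr$. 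The same relation also forces $m_{22}<1$. Forward invariance (Assumption~\ref{ass:forward_invariance}) keeps every iterate in $\X\times\Z$, so all the assumptions apply at each step.

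The main obstacle is the slow bound: the projection point $\bxx$ changes with $\iter$, unlike the fixed $\xstar$ of Theorem~\ref{th:contraction}. The plan handles this by re-selecting the projection at each step and using only $v_{\iter+1}\le\dists{\xtp}{\bxx}$. Care is needed that Lemma~\ref{lemma:linear_convergence_reduced_operator} and the bound $\norms{\red(\xt)}\le\lipr v_\iter$ are both applied with this same $\bxx$; this is what makes the argument close.
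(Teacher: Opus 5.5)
Your proposal is correct and follows the paper's proof essentially step for step: the same slow bound (convex-combination split around the per-step projection $\bxx$, then Lemma~\ref{lemma:linear_convergence_reduced_operator}), the same fast bound (a nearest point of $\Zeq(\xt)$, Assumption~\ref{ass:set_fix_parametrized}, and $\red(\bxx)=0$), and hence the same comparison matrix $M(\pr)$. The only difference is cosmetic. You check the Schur property with the explicit $2\times 2$ criterion $(1-m_{11})(1-m_{22})>m_{12}m_{21}$, which is valid since $m_{11}<1$, instead of the M-matrix principal-minor characterization; both give the same condition $\pr<\bar{\pr}$.
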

The proof of Theorem~\ref{th:linear_convergence_without_contraction} is provided in Appendix~\ref{sec:proof_th_linear_convergence_without_contraction}.

\section{Stochastic Scenario}
\label{sec:stochastic_case}

We now extend the framework to stochastic scenarios.
To this end, we consider a stochastic counterpart of system~\eqref{eq:SP_system} described by 
\begin{subequations}\label{eq:SP_system_stochastic}
  \begin{align}
    \xtp &= \xt + \pr\slow(\xt,\zt,\rt) 
    \label{eq:SP_system_stochastic_slow}
    \\
    \ztp &= \fast(\xt,\zt,\rt),
    \label{eq:SP_system_stochastic_fast}
  \end{align}
\end{subequations}
in which now $\slow: \R^{\n} \times \R^{\m} \times \R^{\nr} \to \R^{\n}$ and $\fast: \R^{\n} \times \R^{\m} \times \R^{\nr} \to \R^{\m}$, $\{\rt\}_{\iter \in \N}$ is an i.i.d. sequence of $\cD$-valued random variables, with $\cD \subseteq \R^{\nr}$, defined on a given complete probability space, %
while the other symbols have the same meaning as in~\eqref{eq:SP_system}.
\begin{remark}[Deterministic time-varying systems]
  The framework above also admits a deterministic time-varying
  counterpart. 
  Specifically, the i.i.d. random sequence
  $\{\rt\}_{\iter\in\N}$ can be replaced by an arbitrary prescribed
  $\cD$-valued sequence.
  In this case, the pointwise assumptions remain unchanged, whereas
  each expectation-based assumption is replaced by the corresponding
  pointwise inequality, required to hold uniformly along the prescribed
  sequence.
  Under these deterministic counterparts of the assumptions, the same
  comparison argument applies pathwise and yields pointwise convergence of the
  resulting deterministic time-varying interconnection.
\end{remark}
The stochastic assumptions below are counterparts of the
deterministic ones. They are formulated so that the one-step
comparison inequalities hold conditionally on the current state.
\begin{assumption}[Stochastic forward invariance]\label{ass:forward_invariance_stoch}
  There exist closed sets $\X \subseteq \R^{\n}$ and $\Z \subseteq \R^{\m}$ such that
  \begin{align}
    \x + \pr\slow(\x,\z,\rand) \in \X, \quad \fast(\x,\z,\rand) \in \Z,
  \end{align}
  for all $(\x,\z,\rand) \in \X \times \Z \times \cD$, and $\pr \in [0,1]$.\oprocend
\end{assumption}
\begin{assumption}[Fixed-point manifold of the stochastic fast operator]\label{ass:set_fix_parametrized_stoch}
  There exists a set-valued map $\Zeq: \X \rightrightarrows \Z$ such that
  \begin{align}
    \bar{\z} = \fast(\x,\bar{\z},\rand),
  \end{align}
  for all $\rand \in \cD$, $\x \in \X$, and $\bar{\z} \in \Zeq(\x)$.
  Moreover, $\Zeq(\x)$ is nonempty and closed for all $\x \in \X$.
  Further, there exists $\lipeq > 0$ such that 
  \begin{align}\label{eq:lipschitz_eq_stoch}
   \distf{\z}{\Zeq(x^\prime)} \leq \lipeq\dists{\x}{\x^\prime},
  \end{align}
  for all $\x, \x^\prime \in \X$ and $\z \in \Zeq(\x)$.\oprocend
\end{assumption}
\begin{assumption}[Stochastic contraction toward the fast fixed-point manifold]\label{ass:fast_paracontractive_stoch} 
  There exists $\cf \in (0,1)$ such that 
  \begin{align}
    \E[\distf{\fast(\x,\z,\rand)}{\Zeq(\x)}] \leq \cf\distf{\z}{\Zeq(\x)},
  \end{align}
  for all $(\x,\z) \in \X\times \Z$.\oprocend
\end{assumption}
\begin{assumption}[Stochastic reduced operator]\label{ass:reduced_operator_stoch}
  There exists $\red: \R^{\n} \times \cD \to \R^{\n}$ such that
  \begin{align}
    \red(\x,\rand) = \slow(\x,\z,\rand),
  \end{align}
  for all $(\x,\z,\rand) \in \{(\x,\z,\rand) \in \X  \times \Z \times \cD\mid \x \in \X, \z \in \Zeq(x)\}$.\oprocend
\end{assumption}
\begin{assumption}[Stochastic Lipschitz
continuity]\label{ass:lip_stoch}
  There exist $\lips, \lipr > 0$ such that
  \begin{subequations}\label{eq:lip_stoch}
    \begin{align}
            \norms{\slow(\x,\z,\rand) - \slow(\x,\z^\prime,\rand)}&\leq \lips\normf{\z - \z^\prime} 
      \\
      \norms{\red(\x,\rand) - \red(\x^\prime,\rand)} &\leq \lipr\norms{\x - \x^\prime},
    \end{align}
  \end{subequations}
  for all $(\x,\z), (\x^\prime,\z^\prime)\in \X \times \Z$ and $\rand \in \cD$.
  \oprocend
\end{assumption}
Now let us introduce the fixed-point set $\Xeq \subseteq \X$ of the reduced operator $\x + \red(\x,\rand)$, namely, 
$$\Xeq := \{\x \in \X \mid \red(\x,\rand) = 0 \text{ for all } \rand \in \cD\}.$$
\begin{assumption}[Reduced system stochastically averaged and metrically subregular]\label{ass:averaged_and_metric_subregular_stoch}
  The set $\Xeq$ is nonempty and closed. 
  Further, the stochastic operator $\x + \red(\x,\rand)$ is $\alpha$-averaged and $\reg$-metric subregular within $\X$ and with respect to $\norms{\cdot}$, for some $\alpha \in (0,1)$ and $\reg \ge 1$, namely 
  \begin{subequations}
    \begin{align}
      \distssmall{\x + \tfrac{1}{\alpha}\red(\x,\rand)}{\x^\prime + \tfrac{1}{\alpha}\red(\x^\prime,\rand)} &\leq\norms{\x - \x^\prime}
      \label{eq:non_expansive_stoch}
      \\
      \dists{\x}{\Xeq} &\leq \reg\E[\norms{\red(\x,\rand)}],\label{eq:metric_subregularity_stoch}
    \end{align}
  \end{subequations}
  for all $\x, \x^\prime \in \X$ and $\rand \in \cD$. 
  \oprocend
\end{assumption}
We are now ready to provide the stochastic counterpart of Lemma~\ref{lemma:linear_convergence_reduced_operator}.
\begin{lemma}[Reduced system linear convergence]\label{lemma:linear_convergence_reduced_operator_stoch}
  Let Assumption~\ref{ass:averaged_and_metric_subregular_stoch} hold.
  Then, with $\cs := 1 - \sqrt{1 - \tfrac{1 - \alpha}{\alpha\reg^2}}$, it holds
  \begin{align}
    \E\left[\dists{\x + \red(\x,\rand)}{\bxx}\right] \leq (1 - \cs)\dists{\x}{\bxx},
  \end{align}
  for all $\x \in \X$ and $\bxx \in \arg\inf_{y \in \Xeq}\dists{\x}{y}$. \oprocend
\end{lemma}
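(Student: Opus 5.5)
The plan is to mirror the deterministic argument behind Lemma~\ref{lemma:linear_convergence_reduced_operator}. I would first prove a pathwise, Fejér-type inequality for every fixed $\rand \in \cD$. Only at the end would I take expectations, using concavity of the square root together with the expected metric subregularity~\eqref{eq:metric_subregularity_stoch}.

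\emph{Step 1 (pathwise decrease).} Fix $\rand \in \cD$ and $\x \in \X$, and define $N(\x,\rand) := \x + \tfrac{1}{\alpha}\red(\x,\rand)$. Then
\[
\x + \red(\x,\rand) = (1-\alpha)\x + \alpha N(\x,\rand).
\]
Let $y := \bxx \in \Xeq$. By definition of $\Xeq$ we have $\red(y,\rand)=0$ for every $\rand$, so $N(y,\rand)=y$. By~\eqref{eq:non_expansive_stoch} with $\x^\prime = y$, this gives $\norms{N(\x,\rand)-y}\le\norms{\x-y}$. Since $\norms{\cdot}$ is induced by an inner product, the identity
\[
\norms{(1-\alpha)a+\alpha b}^2=(1-\alpha)\norms{a}^2+\alpha\norms{b}^2-\alpha(1-\alpha)\norms{a-b}^2
\]
holds. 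I would apply it with $a=\x-y$ and $b=N(\x,\rand)-y$, noting that $a-b=-\tfrac{1}{\alpha}\red(\x,\rand)$. This yields
\[
\norms{\x+\red(\x,\rand)-y}^2\le\norms{\x-y}^2-\tfrac{1-\alpha}{\alpha}\norms{\red(\x,\rand)}^2 .
\]
In particular, the right-hand side is nonnegative for every $\rand$, so its square root is well defined.

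\emph{Step 2 (expectation).} Take square roots in the Step~1 inequality and then expectations. Because $s\mapsto\sqrt{s}$ is concave, Jensen's inequality gives
\[
\E\bigl[\dists{\x+\red(\x,\rand)}{y}\bigr]\le\sqrt{\norms{\x-y}^2-\tfrac{1-\alpha}{\alpha}\E\bigl[\norms{\red(\x,\rand)}^2\bigr]}.
\]
Next, $\E[\norms{\red}^2]\ge(\E[\norms{\red}])^2$, again by Jensen. Finally, $\E[\norms{\red(\x,\rand)}]\ge\tfrac{1}{\reg}\dists{\x}{\Xeq}$ by~\eqref{eq:metric_subregularity_stoch}.

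\emph{Step 3 (using the projection).} Since $y=\bxx$ attains the infimum defining $\dists{\x}{\Xeq}$, we have $\dists{\x}{\Xeq}=\norms{\x-y}$. A minimizer exists because $\Xeq$ is nonempty and closed and we are in finite dimension. Substituting into Step~2 gives
\[
\E\bigl[\dists{\x+\red(\x,\rand)}{y}\bigr]\le\norms{\x-y}\sqrt{1-\tfrac{1-\alpha}{\alpha\reg^2}} = (1-\cs)\,\dists{\x}{\bxx}.
\]
The quantity under the square root is nonnegative: Step~1 forces it to be, or alternatively one checks it directly from $\reg\ge1$ and the admissible range of $\alpha$.

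\emph{Main obstacle.} The delicate point is the order of the operations. Subregularity holds only in expectation and only for the first moment $\E[\norms{\red}]$, whereas the Fejér inequality is quadratic and holds pathwise. The argument therefore has to take the square root pathwise \emph{before} averaging, and then use Jensen twice in the correct directions: concavity of $\sqrt{\cdot}$ to move the expectation inside, and $\E[Y^2]\ge(\E Y)^2$ to pass from the second to the first moment. A secondary technical point is measurability of $\rand\mapsto\red(\x,\rand)$, which I would take as implicit in the assumptions.
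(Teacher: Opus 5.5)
Your proof is correct and is essentially the paper's argument: the averaged-operator identity, quasi-nonexpansiveness at $\bxx$, the bound $\E[\norms{\red(\x,\rand)}^2]\ge(\E[\norms{\red(\x,\rand)}])^2$, and then expected subregularity. The paper averages the squared inequality first and takes the square root at the end (implicitly bounding the first moment by the root of the second), so your remark that the square root \emph{must} be taken pathwise before averaging is overstated.

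One small correction: $1-\tfrac{1-\alpha}{\alpha\reg^2}\ge 0$ does not follow from $\reg\ge 1$ and $\alpha\in(0,1)$ alone (take $\alpha=0.1$, $\reg=1$). Drop that alternative and keep your first justification, which is valid whenever $\X\setminus\Xeq\neq\emptyset$; at points of $\Xeq$ both sides vanish.
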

The proof of Lemma~\ref{lemma:linear_convergence_reduced_operator_stoch} is provided in Appendix~\ref{sec:proof_lemma_linear_convergence_reduced_operator_stoch}.
With Lemma~\ref{lemma:linear_convergence_reduced_operator_stoch} at hand, we are ready to establish the convergence result for the stochastic interconnected system~\eqref{eq:SP_system_stochastic}.
\begin{theorem}[Stochastic timescale separation I]
  \label{th:linear_convergence_without_contraction_stoch}
Let Assumptions~\ref{ass:forward_invariance_stoch},~\ref{ass:set_fix_parametrized_stoch},~\ref{ass:fast_paracontractive_stoch},~\ref{ass:reduced_operator_stoch},~\ref{ass:lip_stoch}, and~\ref{ass:averaged_and_metric_subregular_stoch} hold.
  Let 
  \begin{align}\label{eq:bar_pr_stoch}
    \bar{\pr} := \frac{\cs(1 -\cf)}{\lipeq\lips(\cs + \lipr)}.
  \end{align}
  Then, for all $\pr \in (0,\min\{\bar{\pr},1\})$, there exist $C > 0$ and $\rho \in (0,1)$ such that the trajectories of~\eqref{eq:SP_system_stochastic} satisfy
  \begin{align}
    \norm{\begin{bmatrix}
      \E[\dists{\xt}{\Xeq}]
      \\
      \E[\distf{\zt}{\Zeq(\xt)}]
    \end{bmatrix}} 
    \leq C\rho^\iter
    \norm{\begin{bmatrix}
      \dists{\x_0}{\Xeq}
      \\
      \distf{\z_0}{\Zeq(\x_0)}
    \end{bmatrix}},
    \label{eq:statement_stochastic_case}
  \end{align}
  for all $(\x\ud0,\z\ud0) \in \X \times \Z$ and $\iter \in \N$. \oprocend
\end{theorem}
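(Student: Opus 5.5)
We follow the same comparison-system strategy as in the deterministic results (Theorems~\ref{th:contraction} and~\ref{th:linear_convergence_without_contraction}), now applied to conditional expectations. Define the error coordinates $v_\iter := \dists{\xt}{\Xeq}$ and $w_\iter := \distf{\zt}{\Zeq(\xt)}$, and let $\cF_\iter$ denote the $\sigma$-algebra generated by $\rand_0,\dots,\rand_{\iter-1}$. Then $(\xt,\zt)$ is $\cF_\iter$-measurable and $\rt$ is independent of $\cF_\iter$. The goal is a linear comparison inequality
\begin{align*}
  \begin{bmatrix} \E[v_{\iter+1}\mid\cF_\iter] \\ \E[w_{\iter+1}\mid\cF_\iter]\end{bmatrix}
  \leq A(\pr)\begin{bmatrix} v_\iter \\ w_\iter\end{bmatrix},
\end{align*}
with $A(\pr)$ entrywise nonnegative. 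Taking total expectations and iterating the inequality (valid by nonnegativity of $A(\pr)$) yields~\eqref{eq:statement_stochastic_case}, provided $A(\pr)$ is Schur. In that case we may take $\rho$ slightly larger than its spectral radius and $C$ from the bound $\norm{A(\pr)^\iter}\le C\rho^\iter$.

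\textbf{Slow error.} Pick $\bxx\in\arg\inf_{y\in\Xeq}\dists{\xt}{y}$, which exists because $\Xeq$ is closed. Similarly, pick $\bzx\in\Zeq(\xt)$ attaining $w_\iter$. Write
\[
\xtp=(1-\pr)\xt+\pr(\xt+\red(\xt,\rt))+\pr(\slow(\xt,\zt,\rt)-\slow(\xt,\bzx,\rt)),
\]
using Assumption~\ref{ass:reduced_operator_stoch}. Bound $v_{\iter+1}\le\dists{\xtp}{\bxx}$ and apply the triangle inequality. Conditioning on $\cF_\iter$ and using Lemma~\ref{lemma:linear_convergence_reduced_operator_stoch} for the middle term and Assumption~\ref{ass:lip_stoch} for the last gives
\[
\E[v_{\iter+1}\mid\cF_\iter]\le(1-\pr\cs)v_\iter+\pr\lips w_\iter.
\]

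\textbf{Fast error.} Start from
\[
w_{\iter+1}\le\distf{\ztp}{\Zeq(\xt)}+\sup\text{-type shift}.
\]
To make the shift precise, let $y\in\Zeq(\xt)$ be the closest point to $\ztp$. Then
\[
\distf{\ztp}{\Zeq(\xtp)}\le\distf{\ztp}{y}+\distf{y}{\Zeq(\xtp)}\le\distf{\ztp}{\Zeq(\xt)}+\lipeq\dists{\xtp}{\xt},
\]
by~\eqref{eq:lipschitz_eq_stoch}. The first term has conditional expectation at most $\cf w_\iter$ by Assumption~\ref{ass:fast_paracontractive_stoch}. For the increment, use $\red(\bxx,\rt)=0$ (by definition of $\Xeq$) to write
\[
\norms{\slow(\xt,\zt,\rt)}\le\norms{\slow(\xt,\zt,\rt)-\red(\xt,\rt)}+\norms{\red(\xt,\rt)-\red(\bxx,\rt)}\le\lips w_\iter+\lipr v_\iter.
\]
This gives
\[
\E[w_{\iter+1}\mid\cF_\iter]\le\pr\lipeq\lipr v_\iter+(\cf+\pr\lipeq\lips)w_\iter.
\]
A measurability caveat arises here, since selections of nearest points must be measurable. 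This can be handled by working directly with the distance functions, which are continuous, rather than with selections.

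\textbf{Schur property (main obstacle).} We then have
\[
A(\pr)=\begin{bmatrix}1-\pr\cs & \pr\lips\\ \pr\lipeq\lipr & \cf+\pr\lipeq\lips\end{bmatrix},
\]
a nonnegative matrix for $\pr\le1$. A nonnegative $2\times2$ matrix with diagonal entries below one is Schur iff $(1-a_{11})(1-a_{22})>a_{12}a_{21}$. Here this reads $\pr\cs(1-\cf-\pr\lipeq\lips)>\pr^2\lips\lipeq\lipr$, i.e., $\cs(1-\cf)>\pr\lipeq\lips(\cs+\lipr)$, which is exactly $\pr<\bar\pr$. It also forces $a_{22}<1$. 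The delicate points are therefore this algebraic check and the justification of taking expectations through the distance-to-set bounds; both go through as in the deterministic proof.
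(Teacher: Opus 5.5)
Your proposal is correct and follows essentially the same route as the paper. You derive the same one-step bounds on $\E[\dists{\xp}{\Xeq}]$ and $\E[\distf{\zp}{\Zeq(\xp)}]$ (via Lemma~\ref{lemma:linear_convergence_reduced_operator_stoch}, the Lipschitz constants, and $\red(\bxx,\rand)=0$), and hence the same nonnegative matrix $M(\pr)$, whose Schur property for $\pr<\bar{\pr}$ you verify with the same principal-minor criterion; your explicit conditioning on $\rand_0,\dots,\rand_{\iter-1}$ and your measurability remark make rigorous the passage from a fixed $(\x,\z)$ to the random iterates, which the paper leaves implicit.
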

The proof of Theorem~\ref{th:linear_convergence_without_contraction_stoch} is provided in Appendix~\ref{sec:proof_th_linear_convergence_without_contraction_stoch}.
The next corollary studies the case in which the reduced system is contractive rather than satisfying Assumption~\ref{ass:averaged_and_metric_subregular_stoch}.
\begin{corollary}[Stochastic timescale separation II]
  \label{cor:linear_convergence_stoch}
  Let Assumptions~\ref{ass:forward_invariance_stoch},~\ref{ass:set_fix_parametrized_stoch},~\ref{ass:fast_paracontractive_stoch},~\ref{ass:reduced_operator_stoch}, and~\ref{ass:lip_stoch} hold.
  Moreover, assume there exist $\xstar \in \X$ and $\cs \in (0,1)$ such that $\Xeq = \{\xstar\}$ and
  \begin{align}\label{eq:reduced_contraction_stoch}
    \E[\dists{x+\red(x,\rt)}{\xstar}] \leq (1-\cs)\dists{x}{\xstar},
  \end{align}
  for all $\x \in \X$.
  Then, for all $\pr \in (0,\min\{\bar{\pr},1\})$ (cf.~\eqref{eq:bar_pr_stoch}), there exist $C > 0$ and $\rho \in (0,1)$ such that the trajectories of~\eqref{eq:SP_system_stochastic} satisfy
  \begin{align}
    \norm{\begin{bmatrix}
      \E[\dists{\xt}{\xstar}]
      \\
      \E[\distf{\zt}{\Zeq(\xt)}]
    \end{bmatrix}} \leq C \rho^\iter
    \norm{\begin{bmatrix}
      \dists{\x_0}{\xstar}
      \\
      \distf{\z_0}{\Zeq(\x_0)}
    \end{bmatrix}},
    \label{eq:statement_stochastic_case_contraction}
  \end{align}
  for all $(\x\ud0,\z\ud0) \in \X \times \Z$ and $\iter \in \N$, where $\xstar \in \Xeq$ is the unique fixed point of the reduced operator $\x + \red(\x,\rand)$.
\end{corollary}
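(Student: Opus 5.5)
The corollary differs from Theorem~\ref{th:linear_convergence_without_contraction_stoch} only in the reduced-system hypothesis. My plan is to rerun the proof of that theorem with Lemma~\ref{lemma:linear_convergence_reduced_operator_stoch} replaced by~\eqref{eq:reduced_contraction_stoch}. Since $\Xeq=\{\xstar\}$, we have $\dists{\x}{\Xeq}=\dists{\x}{\xstar}$ and $\bxx=\xstar$ for every $\x$. Hence~\eqref{eq:reduced_contraction_stoch} is exactly the conclusion of Lemma~\ref{lemma:linear_convergence_reduced_operator_stoch}, with $\cs$ now given directly rather than through $\alpha,\reg$. The averagedness and metric subregularity in Assumption~\ref{ass:averaged_and_metric_subregular_stoch} should be used only to produce that inequality, so the rest of the argument carries over.

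Concretely, I will derive two coupled one-step inequalities conditioned on $(\xt,\zt)$, using the independence of $\rt$ from $(\xt,\zt)$. Write $a_\iter := \dists{\xt}{\xstar}$ and $b_\iter := \distf{\zt}{\Zeq(\xt)}$.

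\emph{Slow inequality.} Pick $\bz\in\Zeq(\xt)$ attaining $b_\iter$; it exists because $\Zeq(\xt)$ is closed and nonempty and the norm is Euclidean-type. Then
\[
\xtp-\xstar=(1-\pr)(\xt-\xstar)+\pr\bigl(\xt+\red(\xt,\rt)-\xstar\bigr)+\pr\bigl(\slow(\xt,\zt,\rt)-\slow(\xt,\bz,\rt)\bigr),
\]
using Assumption~\ref{ass:reduced_operator_stoch}. Taking conditional expectations and applying~\eqref{eq:reduced_contraction_stoch} and Assumption~\ref{ass:lip_stoch} gives
\[
\E[a_{\iter+1}\mid\xt,\zt]\le(1-\pr\cs)a_\iter+\pr\lips b_\iter .
\]

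\emph{Fast inequality.} I start from $b_{\iter+1}\le \distf{\ztp}{\Zeq(\xt)}+\lipeq\dists{\xtp}{\xt}$. This follows by taking the projection $w$ of $\ztp$ onto $\Zeq(\xt)$, applying~\eqref{eq:lipschitz_eq_stoch} to $w$, and using the triangle inequality for set distances. Assumption~\ref{ass:fast_paracontractive_stoch} bounds the first term in expectation by $\cf b_\iter$. For the second term, $\norms{\slow(\xt,\zt,\rt)}\le\norms{\red(\xt,\rt)-\red(\xstar,\rt)}+\lips b_\iter\le \lipr a_\iter+\lips b_\iter$, since $\red(\xstar,\rand)=0$ for all $\rand$. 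Hence
\[
\E[b_{\iter+1}\mid\xt,\zt]\le \pr\lipeq\lipr a_\iter+(\cf+\pr\lipeq\lips)b_\iter .
\]

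Taking total expectations gives $v_{\iter+1}\le M v_\iter$ componentwise, where $v_\iter=(\E a_\iter,\E b_\iter)$ and
\[
M=\begin{bmatrix}1-\pr\cs & \pr\lips\\ \pr\lipeq\lipr & \cf+\pr\lipeq\lips\end{bmatrix}.
\]
$M$ is nonnegative, so iterating gives $v_\iter\le M^\iter v_0$. I then check that $M$ is Schur for $\pr<\bar\pr$. Since $M$ is positive, Perron–Frobenius reduces this to $\det(I-M)>0$ together with positive diagonal entries of $I-M$. The condition reads $\pr\cs(1-\cf-\pr\lipeq\lips)>\pr^2\lipeq\lips\lipr$, i.e. $\pr\lipeq\lips(\cs+\lipr)<\cs(1-\cf)$, which is exactly $\pr<\bar\pr$. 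The diagonal condition $1-\cf-\pr\lipeq\lips>0$ then follows automatically. Finally, $\norm{M^\iter}\le C\rho^\iter$ for any $\rho\in(\text{spectral radius of }M,1)$, which yields~\eqref{eq:statement_stochastic_case_contraction}.

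I expect the main obstacle to be the measurability and conditioning details: existence of a measurable selection of the projection $\bz$, and justifying conditional expectations through independence of $\rt$ from $(\xt,\zt)$. Forward invariance (Assumption~\ref{ass:forward_invariance_stoch}) ensures all iterates stay in $\X\times\Z$, so the assumptions apply at every step. The matrix algebra itself is routine and mirrors the deterministic proof.
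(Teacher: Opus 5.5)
Your proposal is correct and follows essentially the paper's route: the paper reruns the proof of Theorem~\ref{th:linear_convergence_without_contraction_stoch} with $\dists{\cdot}{\Xeq}$ replaced by $\dists{\cdot}{\xstar}$, with hypothesis~\eqref{eq:reduced_contraction_stoch} taking the place of Lemma~\ref{lemma:linear_convergence_reduced_operator_stoch}, and arrives at the same matrix $M(\pr)$ and the same $M$-matrix/Schur test. Your explicit conditioning on $(\xt,\zt)$, using the independence of $\rt$, makes rigorous what the paper does implicitly when it fixes $(\x,\z)$ and averages over $\rand$; for the same reason, no measurable selection of the projection $\bz$ is actually needed, since $\bz$ can be chosen for each frozen deterministic pair before the expectation is taken.
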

The proof of Corollary~\ref{cor:linear_convergence_stoch} is provided in Appendix~\ref{sec:proof_linear_convergence_stoch}.
\begin{remark}[Almost sure convergence]
  \label{rem:almost_sure_convergence}
  By Markov's inequality and the Borel-Cantelli lemma, Theorem~\ref{th:linear_convergence_without_contraction_stoch} implies that
  \begin{align*}
    \lim_{\iter \to \infty}\norm{\begin{bmatrix}\dists{\xt}{\Xeq}
      \\
      \distf{\zt}{\Zeq(\xt)}\end{bmatrix}} = 0, \as.
  \end{align*}
  Analogously, Corollary~\ref{cor:linear_convergence_stoch} implies that
   \begin{align*}
    \lim_{\iter\to\infty}
    \norm{\begin{bmatrix}
      \dists{\xt}{\xstar}
      \\
      \distf{\zt}{\Zeq(\xt)}
    \end{bmatrix}} = 0, \as.
  \end{align*}
\end{remark}

\section{Application to Feedback Optimization}
\label{sec:feedback_optimization}

In this section, we use our results to tune controllers in the feedback-optimization setting, see~\cite{colombino2019online,hauswirth2020timescale,carnevale2023nonconvex,hauswirth2021optimization,carnevale2025almost} and references therein.
To show the applicability of all our results, we first consider the deterministic case and then turn to the stochastic case.

\subsection{Deterministic Feedback Optimization}

In the deterministic setup, we consider a nonlinear plant described by  
\begin{align}\label{eq:plant}
  \ztp = \dyn(\zt,\ut),
\end{align}
where $\zt \in \R^{\m}$ is the state of the plant, $\ut \in \R^{\n}$ is the control input, and $\dyn: \R^{\m} \times \R^{\n} \to \R^{\m}$ describes the plant dynamics.
In particular, we consider plants with a low-level controller that endows the closed-loop system with contraction properties.
Specifically, we assume there exists a \emph{steady-state} function $\zss: \R^{\n} \to \R^{\m}$ such that
\begin{align*} 
  \zss(u) = \dyn(\zss(u),u),
\end{align*} 
for all $u \in \R^{\n}$.
In feedback optimization, the goal is to steer system~\eqref{eq:plant} to a configuration corresponding to a solution of an underlying optimization problem of the form
\begin{subequations}\label{eq:problem}
    \begin{align}
      \min_{(z,u) \in \R^{\m} \times \R^{\n}} \cost(z)
      \\
      \text{ s.t. } z = \zss(u),
    \end{align}
\end{subequations}
where $\cost: \R^{\m} \to \R$ is a cost function to be minimized.
Typically, in feedback optimization, the optimization goal formalized in~\eqref{eq:problem} must be achieved without knowledge of the steady-state function $\zss$ and while simultaneously controlling system~\eqref{eq:plant}.
Ideally, one would update the control input $\ut$ by taking a descent step on the function obtained by composing the cost function $\cost$ of the constrained optimization problem~\eqref{eq:problem} with the steady-state map $\zss$.
Namely, the resulting function is $\cost(\zss(\cdot))$ and is referred to as the \emph{reduced cost function} in the feedback optimization literature.
The corresponding descent direction would thus be given by $\nabla\zss(\ut)\frac{\partial\cost(z)}{\partial z}\big|_{z = \zss(\ut)}\T$.
However, unlike the sensitivity function $\nabla\zss(\ut)$, the knowledge of $\zss(\ut)$ is not available in feedback optimization.
Therefore, the exact descent direction $\nabla\zss(\ut)\frac{\partial\cost(z)}{\partial z}\big|_{z = \zss(\ut)}\T$ cannot be used.
To overcome this issue, we replace the unavailable steady-state information $\zss(\ut)$ with the current plant state $\zt$, thus obtaining the interconnected system 
\begin{subequations}\label{eq:closed_loop}
  \begin{align}
    \utp &= \ut - \pr\step\nabla\zss(\ut)\nabla\cost(\zt)
    \label{eq:closed_loop_control_input}
    \\
    \ztp &= \dyn(\zt,\ut),\label{eq:closed_loop_plant}
  \end{align}
\end{subequations}
where $\step > 0$ is a step size parameter, while $\pr > 0$ is a parameter controlling the update rate of $\ut$.
We refer the reader to~\cite{colombino2019online,hauswirth2020timescale,carnevale2023nonconvex,hauswirth2021optimization} for more details on the feedback optimization setup and on the design of~\eqref{eq:closed_loop}.
In the next proposition, we establish the convergence properties of system~\eqref{eq:closed_loop} by leveraging Theorems~\ref{th:contraction} and~\ref{th:linear_convergence_without_contraction} in the strongly convex and convex cases, respectively.
\begin{proposition}\label{prop:feedback_opt}
  Consider system~\eqref{eq:closed_loop}.
  Assume that $\cost(\zss(\cdot))$ is differentiable, $\str$-strongly convex, and has an $\lip$-Lipschitz continuous gradient for some $\str, \lip > 0$.
  Moreover, assume that $\zss$ is differentiable and that, for all $u \in \R^{\n}$, the map $z \mapsto \dyn(z,u)$ is $\cf$-contractive with respect to $\dist{\cdot}{\zss(u)}$ for some $\cf \in (0,1)$.
  Further, let $\nabla\cost$ and $\zss$ be $\lip_\cost$- and $\lipeq$-Lipschitz continuous for some $\lip_\cost, \lipeq > 0$, respectively.
  Let 
  \begin{align}
    \bar{\pr} := \frac{(1 - \sqrt{1 - \step 2\str\lip/(\str + \lip)})(1 -\cf)}{\step\lipeq^2\lip_\cost((1 - \sqrt{1 - \step 2\str\lip/(\str + \lip)}) + \step\lip)}.\label{eq:bpr_sc}
  \end{align}
  Then, for all $\step \in (0,2/(\str + \lip)]$ and $\pr \in (0,\min\{\bar{\pr},1\})$, there exist $C > 0$ and $\rho \in (0,1)$ such that the trajectories of~\eqref{eq:closed_loop} satisfy
  \begin{align*}
    \norm{
      \begin{bmatrix}
        \dist{\ut}{\ustar}
        \\
        \dist{\zt}{\zss(\ut)}
      \end{bmatrix}} 
      \leq C \rho^\iter
      \norm{
        \begin{bmatrix}
          \dist{u\ud0}{\ustar}
          \\
          \dist{\z\ud0}{\zss(u\ud0)}
        \end{bmatrix}},
  \end{align*}
  for all $(u\ud0,\z\ud0) \in \R^{\n} \times \R^{\m}$ and $\iter \in \N$, where $\ustar \in \R^{\n}$ is the unique minimizer of $\cost(\zss(\cdot))$.
  Next, suppose that all the preceding assumptions hold, except that $\cost(\zss(\cdot))$ is only convex and satisfies the quadratic growth condition
  \begin{align*}
    \cost(\zss(u)) - \costzstar \geq \frac{\str}{2} \dist{u}{U_\star}^2,
  \end{align*}
  for some $\str > 0$, where $U_\star$ denotes the set of minimizers of $\cost(\zss(\cdot))$, and $\costzstar$ is the minimum value of $\cost(\zss(\cdot))$.
  Let 
  \begin{align}\label{eq:bpr_c}
    \bar{\pr} := \frac{\left(1 - \sqrt{1 - \frac{\step\str^2(2 - \step\lip)}{4\lip}}\right)(1 -\cf)}{\step\lipeq^2\lip_\cost\left(\left(1 - \sqrt{1 - \frac{\step\str^2(2 - \step\lip)}{4\lip}}\right) + \step\lip\right)}.
  \end{align}
  Then, for all $\step \in (0,2/\lip)$ and $\pr \in (0,\min\{\bar{\pr},1\})$, there exist $C > 0$ and $\rho \in (0,1)$ such that the trajectories of~\eqref{eq:closed_loop} satisfy
  \begin{align*}
    \norm{
      \begin{bmatrix}
        \dist{\ut}{U_\star}
        \\
        \dist{\zt}{\zss(\ut)}
      \end{bmatrix}} 
      \leq C \rho^\iter
      \norm{
        \begin{bmatrix}
          \dist{u\ud0}{U_\star}
          \\
          \dist{\z\ud0}{\zss(u\ud0)}
        \end{bmatrix}},
  \end{align*}
  for all $(u\ud0,\z\ud0) \in \R^{\n} \times \R^{\m}$ and $\iter \in \N$.
\end{proposition}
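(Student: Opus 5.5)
The plan is to cast~\eqref{eq:closed_loop} as an instance of~\eqref{eq:SP_system} and then verify the assumptions of Theorem~\ref{th:contraction} (strongly convex case) and Theorem~\ref{th:linear_convergence_without_contraction} (convex case), computing each constant explicitly. Set $\x = u$ and $\X = \R^{\n}$, $\Z = \R^{\m}$, and use Euclidean norms for $\normf{\cdot}$ and $\norms{\cdot}$. Define
\[
\slow(u,\z) := -\step\nabla\zss(u)\nabla\cost(\z), \qquad \fast(u,\z) := \dyn(\z,u).
\]
The individual assumptions are then checked as follows.
\begin{enumerate}
\item Assumption~\ref{ass:forward_invariance} holds trivially.
\item For Assumption~\ref{ass:set_fix_parametrized}, take $\Zeq(u) = \{\zss(u)\}$, which is nonempty and closed. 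Inequality~\eqref{eq:lipschitz_eq} holds with the Lipschitz constant $\lipeq$ of $\zss$.
\item Assumption~\ref{ass:fast_paracontractive} is exactly the assumed $\cf$-contractivity of $\z\mapsto\dyn(\z,u)$ toward $\zss(u)$.
\item For Assumption~\ref{ass:reduced_operator}, define $\red(u) := -\step\nabla\zss(u)\nabla\cost(\zss(u)) = -\step\nabla(\cost\circ\zss)(u)$.
\item For Assumption~\ref{ass:lip}: since $\zss$ is differentiable and $\lipeq$-Lipschitz, $\norm{\nabla\zss(u)}\le\lipeq$. Hence $\lips = \step\lipeq\lip_\cost$. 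Moreover $\lipr = \step\lip$.
\end{enumerate}
Substituting these into~\eqref{eq:bar_pr} gives the denominator $\step\lipeq^2\lip_\cost(\cs+\step\lip)$, so the two claimed bounds follow once $\cs$ is identified in each case.

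\emph{Strongly convex case.} Let $\phi := \cost\circ\zss$. I would invoke the standard coercivity inequality for $\str$-strongly convex, $\lip$-smooth functions:
\[
\norm{u-\step\nabla\phi(u)-u'+\step\nabla\phi(u')}^2 \le \Big(1-\tfrac{2\step\str\lip}{\str+\lip}\Big)\norm{u-u'}^2 + \step\Big(\step-\tfrac{2}{\str+\lip}\Big)\norm{\nabla\phi(u)-\nabla\phi(u')}^2 .
\]
For $\step\le 2/(\str+\lip)$ the last term is nonpositive. Thus $u+\red(u)$ is a contraction with factor $\sqrt{1-2\step\str\lip/(\str+\lip)}$, i.e., $\cs = 1-\sqrt{1-2\step\str\lip/(\str+\lip)}$. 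This factor lies in $[0,1)$, and I expect $\cs\in(0,1)$ for the nondegenerate range. Its unique fixed point is the minimizer $\ustar$. Theorem~\ref{th:contraction} then gives the first claim with $\bar\pr$ as in~\eqref{eq:bpr_sc}.

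\emph{Convex case.} Here I would check Assumption~\ref{ass:averaged_and_metric_subregular} and use Theorem~\ref{th:linear_convergence_without_contraction} with $\Xeq = U_\star$.
\begin{enumerate}
\item \emph{Fixed-point set.} $U_\star$ is nonempty, since the growth condition presupposes minimizers, and it is closed by continuity of $\nabla\phi$. It coincides with $\{u \mid \red(u)=0\}$ by convexity.
\item \emph{Averagedness.} By Baillon--Haddad, $\nabla\phi$ is $1/\lip$-cocoercive. Hence $\mathrm{Id}-\tfrac{2}{\lip}\nabla\phi$ is nonexpansive, and $u-\step\nabla\phi(u)$ is $\alpha$-averaged with $\alpha=\step\lip/2\in(0,1)$ for $\step<2/\lip$. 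This is~\eqref{eq:non_expansive}.
\item \emph{Metric subregularity.} Let $\bar u$ be the projection of $u$ onto $U_\star$. Convexity gives $\phi(u)-\costzstar\le\norm{\nabla\phi(u)}\,\dist{u}{U_\star}$. Combined with quadratic growth, this yields $\dist{u}{U_\star}\le\tfrac{2}{\str}\norm{\nabla\phi(u)}=\tfrac{2}{\str\step}\norm{\red(u)}$, so $\reg=2/(\str\step)$.
\item \emph{Condition $\reg\ge1$.} Quadratic growth together with $\lip$-smoothness forces $\str\le\lip$. With $\step<2/\lip$ this gives $\str\step<2$, so $\reg>1$.
\end{enumerate}

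It remains to plug into Lemma~\ref{lemma:linear_convergence_reduced_operator}. The computation is
\[
\frac{1-\alpha}{\alpha\reg^2} = \frac{2-\step\lip}{\step\lip}\cdot\frac{\str^2\step^2}{4} = \frac{\step\str^2(2-\step\lip)}{4\lip},
\]
which matches the $\cs$ appearing in~\eqref{eq:bpr_c}. Theorem~\ref{th:linear_convergence_without_contraction} then gives the second claim.

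The main obstacle is the metric-subregularity step: it requires the exact constant $\reg=2/(\str\step)$ and the check $\reg\ge1$. Everything else is bookkeeping of constants already fixed by the framework.
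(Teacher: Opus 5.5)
Your proposal is correct and matches the paper's proof. You use the same identification $\x=u$, $\slow(u,\z)=-\step\nabla\zss(u)\nabla\cost(\z)$, $\fast(u,\z)=\dyn(\z,u)$, $\Zeq(u)=\{\zss(u)\}$, $\lips=\step\lipeq\lip_\cost$, $\lipr=\step\lip$, and then apply Theorem~\ref{th:contraction} in the strongly convex case and Theorem~\ref{th:linear_convergence_without_contraction} with $\alpha=\step\lip/2$, $\reg=2/(\step\str)$ in the convex case. You also derive constants the paper only asserts: the coercivity bound, Baillon--Haddad averagedness, $\reg$ and the check $\reg\ge 1$, and $\norm{\nabla\zss(u)}\le\lipeq$. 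Your one hedge can be settled: $\cs\in(0,1]$, with $\cs=1$ only when $\str=\lip$ and $\step=1/\lip$, and in that case the proof of Theorem~\ref{th:contraction} goes through unchanged.
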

\begin{proof}
  The proof of the first result is a direct application of Theorem~\ref{th:contraction}.
  Indeed, by inspecting the closed-loop system~\eqref{eq:closed_loop},  we see that it can be written as an instance of the feedback-interconnected system~\eqref{eq:SP_system}.
  Specifically, we can identify
  \begin{subequations}
    \begin{align}
      x &:= u
      \\
      \slow(x,z) &:= -\step\nabla\zss(x)\nabla\cost(z)\label{eq:slow_fo}
      \\
      \fast(x,z) &:= \dyn(z,x).
    \end{align}
  \end{subequations}
  Hence, to apply Theorem~\ref{th:contraction}, let us check that all the required assumptions are satisfied.
  First of all, system~\eqref{eq:closed_loop} trivially satisfies Assumption~\ref{ass:forward_invariance} with $\X = \R^{\n}$ and $\Z = \R^{\m}$.
  Then, to check Assumption~\ref{ass:set_fix_parametrized}, we note that, by assumption, the point $\zss(u)$ is an equilibrium of subsystem~\eqref{eq:closed_loop_plant} for all $u \in \R^\n$ and, thus, we have
  \begin{align}\label{eq:zeq_fo} 
    \Zeq(u) = \{z \in \R^{\m} \mid z = \zss(u)\}.
  \end{align}
  In particular, the Lipschitz continuity condition~\eqref{eq:lipschitz_eq} in Assumption~\ref{ass:set_fix_parametrized} is verified by the fact that the equilibrium function $\zss$ is $\lipeq$-Lipschitz continuous by hypothesis.
  Analogously, we remark that, for fixed $u$, contraction holds true by hypothesis so that paracontractivity of the fast operator $\fast$ (i.e., Assumption~\ref{ass:fast_paracontractive}) is verified too.
  Now, we recall that the reduced system associated to~\eqref{eq:closed_loop} must be obtained by studying the slow dynamics~\eqref{eq:closed_loop_control_input} with $\pr = 1$ and $\zt \in \Zeq(\ut)$ for all $\iter \in \N$, see Assumption~\ref{ass:reduced_operator}.
  In detail, by observing the slow operator definition in~\eqref{eq:slow_fo} and the definition of the fast equilibrium set in~\eqref{eq:zeq_fo}, the reduced system associated to~\eqref{eq:closed_loop} explicitly corresponds to
  \begin{align}\label{eq:reduced_feedback_opt}
    \utp = \ut - \step\nabla\cost(\zss(\ut)),
  \end{align}
  which means that system~\eqref{eq:closed_loop} satisfies Assumption~\ref{ass:reduced_operator} with
  \begin{align}
    \red(x) = -\step\nabla\cost(\zss(x)).
  \end{align}
  That is, the reduced system exactly corresponds to the gradient descent method applied to the reduced cost function $\cost(\zss(u))$.
  Therefore, since $\cost(\zss(\cdot))$ is $\str$-strongly convex and has an $\lip$-Lipschitz continuous gradient by hypothesis, we know that, for all $\step \in (0,2/(\str +\lip)]$, system~\eqref{eq:reduced_feedback_opt} satisfies Assumption~\ref{ass:slow_contractive} with $\cs = 1 - \sqrt{1 - \step 2\str\lip/(\str+\lip)}$ (see, e.g.,~\cite{ryu2016primer}).
  Finally, the Lipschitz conditions in Assumption~\ref{ass:lip} hold with $\lips = \step\lipeq\lip_\cost$ and $\lipr = \step\lip$.
  Therefore, all the assumptions of Theorem~\ref{th:contraction} are satisfied, and the claim follows by direct application of the theorem.

  The second result instead relies on Theorem~\ref{th:linear_convergence_without_contraction}.
  The only difference from the previous case is that the reduced system~\eqref{eq:reduced_feedback_opt}
  is no longer contractive.
  Nevertheless, the convexity and quadratic growth of $\cost(\zss(\cdot))$
  guarantee that the reduced operator satisfies
  Assumption~\ref{ass:averaged_and_metric_subregular} with
  $\alpha = \step\lip/2$ and $\reg = 2/(\step\str)$.
  Thus, by invoking Lemma~\ref{lemma:linear_convergence_reduced_operator} using these parameters, we obtain the linear convergence property expressed in~\eqref{eq:linear_convergence_without_contraction} with
  \begin{align*}
    \cs = 1 - \sqrt{1 - \frac{\step\str^2(2 - \step\lip)}{4\lip}}.
  \end{align*}
  Hence, all the assumptions of Theorem~\ref{th:linear_convergence_without_contraction} are satisfied, and the
  claim follows by direct application of the theorem.
\end{proof}

\subsection{Stochastic Feedback Optimization}

Inspired by~\cite{carnevale2025almost}, we next consider a stochastic version of this framework.
In particular, rather than considering a deterministic plant, we consider a stochastic plant described by
\begin{align}\label{eq:stochastic_plant}
  \ztp = \dyn(\zt,\ut,\rt),
\end{align}
where $\dyn: \R^{m} \times \R^{n} \times \R^{\nr} \to \R^{m}$ describes the plant dynamics and $\{\rt\}_{\iter\in\N}$ is an i.i.d. sequence of $\cD$-valued random variables with $\cD \subseteq \R^{\nr}$.
Accordingly, the steady-state function $\zss$ now satisfies
\begin{align*}
  \zss(u) = \dyn(\zss(u),u,\rand),
\end{align*}
for all $u \in \R^{\n}$ and $\rand \in \cD$.
We consider the same optimization problem~\eqref{eq:problem} and, thus, the same gradient-based controller~\eqref{eq:closed_loop_control_input} as in the deterministic case.
Hence, the resulting closed-loop system is given by
\begin{subequations}\label{eq:closed_loop_stochastic}
  \begin{align}
    \utp &= \ut - \pr\step\nabla\zss(\ut)\nabla\cost(\zt)
    \label{eq:closed_loop_control_input_stochastic}
    \\
    \ztp &= \dyn(\zt,\ut,\rt).
    \label{eq:closed_loop_plant_stochastic}
  \end{align}
\end{subequations}
The next proposition establishes almost sure convergence of the closed-loop system~\eqref{eq:closed_loop_stochastic} by leveraging Corollary~\ref{cor:linear_convergence_stoch} and Theorem~\ref{th:linear_convergence_without_contraction_stoch} in the strongly convex and convex cases, respectively.
\begin{proposition}\label{prop:feedback_opt_stoch}
  Consider system~\eqref{eq:closed_loop_stochastic}.
  Assume that $\cost(\zss(\cdot))$ is differentiable, $\str$-strongly convex, and has an $\lip$-Lipschitz continuous gradient for some $\str, \lip > 0$.
  Moreover, assume that $\zss$ is differentiable and there exists $\cf \in (0,1)$ such that
  \begin{align}\label{eq:contraction_stochastic}
    \E[\dist{\dyn(\z,u,\rt)}{\zss(u)}] \leq \cf\dist{\z}{\zss(u)},
  \end{align}
  for all $(\z,u) \in \R^{\m} \times \R^{\n}$ and $\iter \in \N$. %
  Further, let $\nabla\cost$ and $\zss$ be $\lip_\cost$- and $\lipeq$-Lipschitz continuous for some $\lip_\cost, \lipeq > 0$, respectively.
  Let 
  \begin{align}
    \bar{\pr} := \frac{(1 - \sqrt{1 - \step 2\str\lip/(\str + \lip)})(1 -\cf)}{\step\lipeq^2\lip_\cost((1 - \sqrt{1 - \step 2\str\lip/(\str + \lip)}) + \step\lip)}.\label{eq:bpr_sc_stoch}
  \end{align}
  Then, for all $\step \in (0,2/(\str + \lip)]$ and $\pr \in (0,\min\{\bar{\pr},1\})$, there exist $C > 0$ and $\rho \in (0,1)$ such that the trajectories of~\eqref{eq:closed_loop_stochastic} satisfy
  \begin{align*}
      \norm{\begin{bmatrix}
        \E[\dist{\ut}{\ustar}]
        \\
        \E[\dist{\zt}{\zss(\ut)}]
      \end{bmatrix}}
      &\leq C \rho^\iter
      \norm{\begin{bmatrix}
          \dist{u\ud0}{\ustar}
          \\
          \dist{\z\ud0}{\zss(u\ud0)}
        \end{bmatrix}},
    \end{align*}
  for all $(u\ud0,\z\ud0) \in \R^{\n} \times \R^{\m}$ and $\iter \in \N$, where $\ustar \in \R^{\n}$ is the unique minimizer of $\cost(\zss(\cdot))$.
  Next, suppose that all the preceding assumptions hold, except that $\cost(\zss(\cdot))$ is only convex and satisfies the quadratic growth condition
  \begin{align}\label{eq:quadratic_growth}
    \cost(\zss(u)) - \costzstar \geq \frac{\str}{2} \dist{u}{U_\star}^2,
  \end{align}
  for some $\str > 0$, where $U_\star$ denotes the set of minimizers of $\cost(\zss(\cdot))$, and $\costzstar$ is the minimum value of $\cost(\zss(\cdot))$.
  Let 
  \begin{align}\label{eq:bpr_c_stoch}
    \bar{\pr} := \frac{\left(1 - \sqrt{1 - \frac{\step\str^2(2 - \step\lip)}{4\lip}}\right)(1 -\cf)}{\step\lipeq^2\lip_\cost\left(\left(1 - \sqrt{1 - \frac{\step\str^2(2 - \step\lip)}{4\lip}}\right) + \step\lip\right)}.
  \end{align}
  Then, for all $\step \in (0,2/\lip)$ and $\pr \in (0,\min\{\bar{\pr},1\})$, there exist $C > 0$ and $\rho \in (0,1)$ such that the trajectories of~\eqref{eq:closed_loop_stochastic} satisfy
  \begin{align*}
      \norm{\begin{bmatrix}
        \E[\dist{\ut}{U_\star}]
        \\
        \E[\dist{\zt}{\zss(\ut)}]
      \end{bmatrix}}
      &\leq C \rho^\iter
        \norm{\begin{bmatrix}
           \dist{u\ud0}{U_\star}
          \\
          \dist{\z\ud0}{\zss(u\ud0)}
        \end{bmatrix}}
      ,
  \end{align*}
  for all $(u\ud0,\z\ud0) \in \R^{\n} \times \R^{\m}$.
\end{proposition}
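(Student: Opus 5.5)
The proof mirrors the deterministic Proposition~\ref{prop:feedback_opt}: we cast~\eqref{eq:closed_loop_stochastic} as an instance of~\eqref{eq:SP_system_stochastic} and then check the stochastic assumptions. Concretely, we set $x := u$, $\slow(x,z,\rand) := -\step\nabla\zss(x)\nabla\cost(z)$ (independent of $\rand$), and $\fast(x,z,\rand) := \dyn(z,x,\rand)$, with $\X = \R^{\n}$ and $\Z = \R^{\m}$. Then Assumption~\ref{ass:forward_invariance_stoch} holds trivially.

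Next we verify the remaining structural assumptions one by one.
\begin{itemize}
\item \emph{Assumption~\ref{ass:set_fix_parametrized_stoch}.} The steady-state relation $\zss(u) = \dyn(\zss(u),u,\rand)$ holds for every $\rand \in \cD$, so $\Zeq(u) = \{\zss(u)\}$ is a nonempty closed singleton. The Lipschitz bound~\eqref{eq:lipschitz_eq_stoch} is just $\lipeq$-Lipschitz continuity of $\zss$.
\item \emph{Assumption~\ref{ass:fast_paracontractive_stoch}.} This is exactly~\eqref{eq:contraction_stochastic}, since $\distf{\cdot}{\Zeq(u)} = \dist{\cdot}{\zss(u)}$. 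Because $\rt$ is i.i.d., this can be evaluated for a generic $\rand$ independent of the current state.
\item \emph{Assumption~\ref{ass:reduced_operator_stoch}.} Substituting $z = \zss(x)$ gives the deterministic map $\red(x,\rand) = -\step\nabla\cost(\zss(x))$, by the chain rule.
\item \emph{Assumption~\ref{ass:lip_stoch}.} It holds with $\lips = \step\lipeq\lip_\cost$, using $\norm{\nabla\zss(x)} \le \lipeq$, and with $\lipr = \step\lip$.
\end{itemize}
With these constants, $\lipeq\lips = \step\lipeq^2\lip_\cost$. Substituting into~\eqref{eq:bar_pr_stoch} therefore reproduces~\eqref{eq:bpr_sc_stoch} and~\eqref{eq:bpr_c_stoch}, once the appropriate $\cs$ is inserted.

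For the strongly convex case we apply Corollary~\ref{cor:linear_convergence_stoch}. The reduced operator $x \mapsto x - \step\nabla\cost(\zss(x))$ is gradient descent on an $\str$-strongly convex, $\lip$-smooth function, which has no randomness. For $\step \in (0,2/(\str+\lip)]$ it is a contraction with modulus $\sqrt{1-2\step\str\lip/(\str+\lip)}$. Consequently $\Xeq = \{\ustar\}$, and~\eqref{eq:reduced_contraction_stoch} holds deterministically, hence also in expectation, with $\cs = 1-\sqrt{1-2\step\str\lip/(\str+\lip)}$. The claimed bound then follows directly from~\eqref{eq:statement_stochastic_case_contraction}.

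For the convex case with quadratic growth we apply Theorem~\ref{th:linear_convergence_without_contraction_stoch}. Since $\red$ does not depend on $\rand$, Assumption~\ref{ass:averaged_and_metric_subregular_stoch} reduces to its deterministic version, and $\Xeq = U_\star$, which is nonempty and closed as the minimizer set of a continuous convex function with quadratic growth.
\begin{itemize}
\item \emph{Averagedness.} By Baillon--Haddad, $\mathrm{Id} - \tfrac{2}{\lip}\nabla(\cost\circ\zss)$ is nonexpansive, so the gradient step is averaged with $\alpha = \step\lip/2 \in (0,1)$ for $\step \in (0,2/\lip)$.
\item \emph{Metric subregularity.} Combining convexity with quadratic growth gives $\tfrac{\str}{2}\dist{u}{U_\star}^2 \le \cost(\zss(u)) - \costzstar \le \norm{\nabla\cost(\zss(u))}\,\dist{u}{U_\star}$. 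Hence $\dist{u}{U_\star} \le \tfrac{2}{\str}\norm{\nabla\cost(\zss(u))} = \tfrac{2}{\step\str}\norm{\red(u)}$, that is, $\reg = 2/(\step\str)$.
\end{itemize}

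The main obstacle is the condition $\reg \ge 1$ together with the formula for $\cs$. We must check that $\cs = 1-\sqrt{1-(1-\alpha)/(\alpha\reg^2)}$ simplifies to the expression in~\eqref{eq:bpr_c_stoch}. Indeed, $(1-\alpha)/(\alpha\reg^2) = \step\str^2(2-\step\lip)/(4\lip)$. The condition $\reg \ge 1$ holds since $\step\str \le \step\lip < 2$, using $\str \le \lip$. Lemma~\ref{lemma:linear_convergence_reduced_operator_stoch} then supplies the contraction toward $U_\star$, and Theorem~\ref{th:linear_convergence_without_contraction_stoch} yields the stated bound.
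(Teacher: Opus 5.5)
Your proposal is correct and follows the paper's proof. Like the paper, which reduces to the proof of Proposition~\ref{prop:feedback_opt}, you instantiate~\eqref{eq:SP_system_stochastic} with $x := u$, $\slow(x,z,\rand) := -\step\nabla\zss(x)\nabla\cost(z)$, $\fast(x,z,\rand) := \dyn(z,x,\rand)$, $\lips = \step\lipeq\lip_\cost$, $\lipr = \step\lip$, and then invoke Corollary~\ref{cor:linear_convergence_stoch} with the gradient-descent contraction modulus and Theorem~\ref{th:linear_convergence_without_contraction_stoch} with $\alpha = \step\lip/2$ and $\reg = 2/(\step\str)$; your extra checks (Baillon--Haddad for averagedness, the convexity-plus-quadratic-growth derivation of subregularity, $\reg \ge 1$, and the simplification of $\cs$) fill in steps the paper leaves implicit.
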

\begin{proof}
  The proof is the same as that of Proposition~\ref{prop:feedback_opt} with the only difference that we apply Corollary~\ref{cor:linear_convergence_stoch} and Theorem~\ref{th:linear_convergence_without_contraction_stoch} rather than Theorem~\ref{th:contraction} and Theorem~\ref{th:linear_convergence_without_contraction} because of the stochastic nature of the plant~\eqref{eq:closed_loop_plant_stochastic}.
\end{proof}

\subsection{Numerical Simulations}

In this section, we test the closed-loop system~\eqref{eq:closed_loop} with $\pr$ tuned using the theoretical bound provided by Proposition~\ref{prop:feedback_opt}.
First, we consider $\n = \m = 3$ and a linear deterministic plant 
\begin{align}
  \ztp = \zt + \kappa(\ut - \zt),\label{eq:linear_plant}
\end{align}
where $\kappa$ is sampled uniformly from the interval $[0.1,0.9]$.
Hence, the steady-state function of system~\eqref{eq:linear_plant} is $\zss(u) = u$ and the contraction rate is $\cf = 1 - \kappa$.
We generate the cost function $\cost$ as the quadratic function
\begin{align}
  \cost(z) = \frac{1}{2}z\T Qz + z\T b,\label{eq:quadratic_cost_feedback_opt}
\end{align}
where the matrix $Q = Q\T \in \R^{3 \times 3}$ is randomly generated so that its eigenvalues lie in $[1,5]$, while each component of $b \in \R^3$ is sampled uniformly from the interval $[-10,20]$.
In all simulations, system~\eqref{eq:closed_loop} is initialized at the same random state, sampled from a Gaussian distribution with zero mean and covariance matrix $100 \cdot I_6$.
Denoting by $\ustar \in \R^3$ the unique minimizer of~\eqref{eq:quadratic_cost_feedback_opt}, Fig.~\ref{fig:feedback_optimization} shows the evolution, over the iterations $\iter$, of the error $\norm{\ut - \ustar}$ for the reduced system (i.e., subsystem~\eqref{eq:closed_loop_control_input} with $\pr = 1$ and $\zt = \zss(\ut)$ for all $\iter \in \N$) and the closed-loop system~\eqref{eq:closed_loop} with $\pr = 1$ (i.e., without timescale separation) and with $\pr = 0.999\bar{\pr}$ tuned according to~\eqref{eq:bpr_sc} in Proposition~\ref{prop:feedback_opt}.
All three cases are tested with $\step = 2/(\str +\lip)$, where $\str > 0$ and $\lip >0 $ are the smallest and largest eigenvalues of $Q$ in~\eqref{eq:quadratic_cost_feedback_opt}, respectively.
Fig.~\ref{fig:feedback_optimization} shows that the closed-loop system~\eqref{eq:closed_loop} converges to the unique minimizer of problem~\eqref{eq:quadratic_cost_feedback_opt} at a linear rate when the parameter $\pr$ is tuned according to~\eqref{eq:bpr_sc} in Proposition~\ref{prop:feedback_opt}.
Moreover, Fig.~\ref{fig:feedback_optimization} also highlights the \emph{necessity} of timescale separation: in the case with $\pr = 1$, even when using a stepsize $\step$ that guarantees convergence of the reduced system (i.e., the gradient method) both theoretically ($\step = 2/(\str + \lip)$) and numerically (see Fig.~\ref{fig:feedback_optimization}), the closed-loop system exhibits unstable behavior.
\begin{figure}[H]
  \centering
  \includegraphics[scale=1]{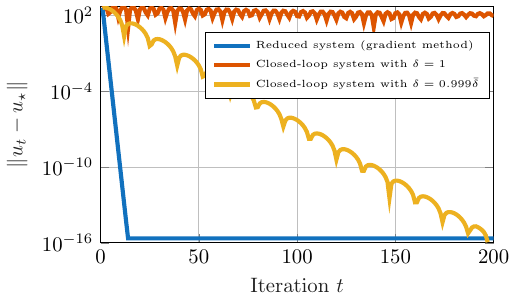}
  \caption{Error $\norm{\ut - \ustar}$ achieved in the strongly convex case by the closed-loop system~\eqref{eq:closed_loop} with different values of $\pr$, together with the reduced system~\eqref{eq:reduced_feedback_opt}. All three cases are tested with $\step = 2/(\str + \lip)$.}
  \label{fig:feedback_optimization}
\end{figure}
To test~\eqref{eq:bpr_c} as well, we consider the same setup as before, but generate the cost function $\cost$ as a quadratic function with a positive semidefinite matrix $Q = Q^\top \in \R^{3 \times 3}$ having one zero eigenvalue, while the remaining two eigenvalues are sampled from the interval $[1,5]$.
Hence, the resulting cost function $\cost(\zss(\cdot))$ is convex (but not strongly convex) and satisfies the quadratic growth condition~\eqref{eq:quadratic_growth} with $\str > 0$ corresponding to the smallest nonzero eigenvalue of $Q$.
Moreover, we modify the generation of $b$ by ensuring that $b$ is orthogonal to the kernel of $Q$, which guarantees the existence of nonunique minimizers of $\cost(\zss(\cdot))$.
As in the previous experiment, we test the reduced system (i.e., subsystem~\eqref{eq:closed_loop_control_input} with $\pr = 1$ and $\zt = \zss(\ut)$ for all $\iter \in \N$) and the closed-loop system~\eqref{eq:closed_loop} with $\pr = 1$ (i.e., without timescale separation) and with $\pr = 0.999\bar{\pr}$ tuned according to~\eqref{eq:bpr_c} in Proposition~\ref{prop:feedback_opt}.
All three cases are tested with $\step = 1/\lip$, where $\lip >0$ corresponds to the largest eigenvalue of $Q$.
This experiment again confirms the effectiveness of the proposed tuning strategy and the necessity of timescale separation to guarantee convergence of the closed-loop system~\eqref{eq:closed_loop}.
\begin{figure}[H]
  \centering
  \includegraphics[scale=1]{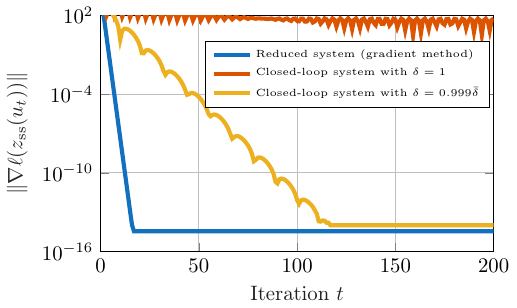}
  \caption{Optimality error $\norm{\nabla\cost(\zss(\ut))}$ achieved in the convex case by the closed-loop system~\eqref{eq:closed_loop} with different values of $\pr$, together with the reduced system~\eqref{eq:reduced_feedback_opt}. 
  All three cases are tested with $\step = 1/\lip$.}
  \label{fig:feedback_optimization_without_contraction}
\end{figure}
Then, we test the stochastic result in Proposition~\ref{prop:feedback_opt_stoch}.
We consider the same setup as before, except that the plant is now modeled as the stochastic linear system
\begin{align}
  \ztp = \zt + \rt(\ut - \zt), 
  \label{eq:stochastic_linear_plant}
\end{align}
where $\{\rt\}_{\iter\in\N}$ is an independent and identically distributed
sequence of Bernoulli random variables.
In particular, we sample the Bernoulli parameter $\E[\rt]$ uniformly from the interval $[0,1]$, which ensures that~\eqref{eq:contraction_stochastic} is satisfied with $\cf = 1 - \E[\rt]$.
We consider the same strongly convex and convex setups as above, with the same tests, parameters, and using the tuning rules provided by~\eqref{eq:bpr_sc_stoch} and~\eqref{eq:bpr_c_stoch} in Proposition~\ref{prop:feedback_opt_stoch}, respectively.
Fig.~\ref{fig:feedback_optimization_stochastic} shows the results of the tests performed in the strongly convex case, while Fig.~\ref{fig:feedback_optimization_without_contraction_stochastic} shows the results of the tests performed in the convex case.
As before, the results confirm the effectiveness of the proposed tuning strategy and the necessity of timescale separation to guarantee convergence of the closed-loop system~\eqref{eq:closed_loop_stochastic}.
\begin{figure}[H]
  \centering
  \includegraphics[scale=1]{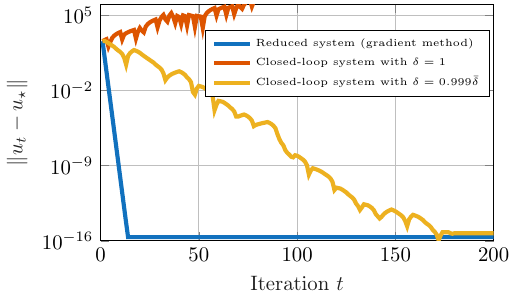}
  \caption{Error $\norm{\ut - u_\star}$ achieved in the strongly convex case by the closed-loop system~\eqref{eq:closed_loop_stochastic} with different values of $\pr$, together with the reduced system~\eqref{eq:reduced_feedback_opt}. All three cases are tested with $\step = 2/(\str + \lip)$.}
  \label{fig:feedback_optimization_stochastic}
\end{figure}
\begin{figure}[H]
  \centering
  \includegraphics[scale=1]{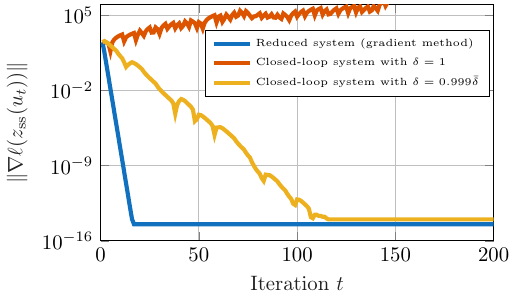}
  \caption{Optimality error $\norm{\nabla\cost(\zss(\ut))}$ achieved in the convex case by the closed-loop system~\eqref{eq:closed_loop_stochastic} with different values of $\pr$, together with the reduced system~\eqref{eq:reduced_feedback_opt}. 
  All three cases are tested with $\step = 1/\lip$.}
  \label{fig:feedback_optimization_without_contraction_stochastic}
\end{figure}

\section{Conclusions}
\label{sec:conclusions}

In this paper, we developed an operator-theoretic framework for
timescale separation in interconnected discrete-time systems.
By describing the fast dynamics through convergence toward a slow-state-dependent fixed-point set and the slow dynamics through an associated reduced operator, we obtained explicit and readily checkable bounds on the timescale parameter to guarantee convergence of the full interconnection. 
These bounds are expressed in terms of the operator constants appearing in the assumptions.
The deterministic analysis covers both contractive reduced operators
and the broader class of averaged and metrically subregular operators,
for which linear convergence to the corresponding fixed-point set was
established.
We also extended the framework to stochastic operators and provided
conditions ensuring almost sure convergence of the interconnected
dynamics.
The application to deterministic and stochastic feedback optimization
illustrated how the proposed results can be used to derive systematic
tuning rules and to quantify the separation required between plant and
optimization dynamics.

\appendix

\subsection{Proof of Theorem~\ref{th:contraction}}
\label{sec:proof_th_contraction}

Let us consider an arbitrary pair $(x,z) \in \X \times \Z$ and let $\xp := \x + \pr\slow(x,z)$ and $\zp := \fast(x,z)$ be the next state of system~\eqref{eq:SP_system} starting from $(x,z)$.
Then, we can write
  \begin{align}
    \dists{\xp}{\xstar}
    &= \dists{\x + \pr\slow(\x,\z)}{\xstar}
    \notag\\
    &\stackrel{(a)}{\leq}
    \dists{\x + \pr\red(\x)}{\xstar} + \pr\dists{\slow(\x,\bzx)}{\slow(\x,\z)}
    \notag\\
    &\stackrel{(b)}{\leq}
    (1-\delta)\dists{\x}{\xstar} + \pr\dists{\x + \red(\x)}{\xstar} 
    + \pr\dists{\slow(\x,\bzx)}{\slow(\x,\z)}
    \notag\\
    &\stackrel{(c)}{\leq}
    (1-\pr)\dists{\x}{\xstar}+ \pr\ocs\dists{\x}{\xstar} 
    + \pr\lips\distf{\z}{\bzx}
    \notag\\
    &\stackrel{(d)}{=}
    (1-\pr\cs)\dists{\x}{\xstar}+  \pr\lips\distf{\z}{\Zeq(\x)},\label{eq:xtp_xstar}
  \end{align}
  where in $(a)$ we choose $\bzx \in \arg\inf_{y \in \Zeq(\x)}\distf{\z}{y}$, add and subtract $\pr\red(\x) = \pr\slow(\x,\bzx)$ (cf. Assumption~\ref{ass:reduced_operator}) within $\dists{\x + \pr\slow(\x,\z)}{\xstar}$ and apply the triangle inequality, in $(b)$ we use the decompositions $\x = (1-\pr)\x + \pr\x$ and $\xstar = (1-\pr)\xstar + \pr \xstar$, in $(c)$ we apply the triangle inequality, factor out the nonnegative scalars $\pr$ and $(1-\pr)$, 
  ~\eqref{eq:slow_paracontractive} in Assumption~\ref{ass:slow_contractive}, and the $\lips$-Lipschitz continuity of $\slow$ (cf. Assumption~\ref{ass:lip}), while in $(d)$ we use the fact that $\bzx \in \arg\inf_{y \in \Zeq(\x)}\distf{\z}{y}$ by definition and reorganize the other terms.
  We now evaluate the quantity $\distf{\zp}{\Zeq(\xp)}$.
  By using the definition of $\distf{\cdot}{\Zeq(\x)}$ and adding and subtracting $\bzxz \in \arg\inf_{y \in \Zeq(\x)}\distf{\fast(\x,\z)}{y}$, we get
  \begin{align}
    \distf{\zp}{\Zeq(\xp)} 
    &= 
    \inf_{y \in \Zeq(\xp)}\distf{\fast(\x,\z)}{y}
    \notag\\
    &\stackrel{(a)}{\leq} 
    \distf{\fast(\x,\z)}{\bzxz} +  \inf_{y \in \Zeq(\xp)}\distf{\bzxz}{y}
    \notag\\
    &\stackrel{(b)}{\leq} 
    \distf{\fast(\x,\z)}{\Zeq(\x)} 
    + \lipeq\dists{\xp}{\x}
    \notag\\
    &\stackrel{(c)}{\leq} 
    \cf\distf{\z}{\Zeq(\x)} 
    + \pr\lipeq\norms{\slow(\x,\z)}
    \notag\\
    &\stackrel{(d)}{\leq} 
    \cf\distf{\z}{\Zeq(\x)}
    + \pr\lipeq\dists{\slow(\x,\z)}{\slow(\x,\bzx)}
    \notag\\
    &\hspace{.4cm}
    + \pr\lipeq\dists{\red(\x)}{\red(\xstar)}
    \notag\\
    &\stackrel{(e)}{\leq} 
    \cf\distf{\z}{\Zeq(\x)}
    + \pr\lipeq\lips\distf{\z}{\bzx}
    \notag\\
    &\hspace{.4cm}
    + \pr\lipeq\lipr\dists{\x}{\xstar}
    \notag\\
    &\stackrel{(f)}{=} 
    \cf\distf{\z}{\Zeq(\x)}
    + \pr\lipeq\lips\distf{\z}{\Zeq(\x)}
    + \pr\lipeq\lipr\dists{\x}{\xstar}
    ,\label{eq:ztp_fix_xt}
  \end{align}
  where in $(a)$ we use the triangle inequality, in $(b)$ we use the fact that $\bzxz \in \arg\inf_{y \in \Zeq(\x)}\distf{\fast(\x,\z)}{y}$, the definition of $\distf{\fast(\x,\z)}{\Zeq(\x)}$, and the $\lipeq$-Lipschitz continuity bound~\eqref{eq:lipschitz_eq} (cf. Assumption~\ref{ass:set_fix_parametrized}), in $(c)$ we apply the $\cf$-contractivity of $\fast$ (cf. Assumption~\ref{ass:fast_paracontractive}) and use the slow evolution~\eqref{eq:SP_system_slow}, in $(d)$ we choose $\bzx \in \arg\inf_{y \in \Zeq(\x)}\distf{\z}{y}$ and add and subtract within the last norm the term $\slow(\x,\bzx) = \red(\x)$, apply the triangle inequality and use the fact that $\red(\xstar) = 0$ (cf. Assumption~\ref{ass:slow_contractive}), in $(e)$ we use the Lipschitz continuity of $\slow$ and $\red$ (cf. Assumption~\ref{ass:lip}), while in $(f)$ we use the fact that $\bzx \in \arg\inf_{y \in \Zeq(\x)}\distf{\z}{y}$ by definition.
  Hence, by collecting~\eqref{eq:xtp_xstar} and~\eqref{eq:ztp_fix_xt}, we can write
  \begin{align}
    \begin{bmatrix}
      \dists{\xp}{\xstar} 
      \\
      \distf{\zp}{\Zeq(\xp)}
    \end{bmatrix} 
    \leq 
    M(\pr)
    \begin{bmatrix}
      \dists{\x}{\xstar}
      \\
      \distf{\z}{\Zeq(\x)}
    \end{bmatrix},
    \label{eq:norm_evolution}
  \end{align}
  where the matrix $M(\pr) \in \R^{2 \times 2}$ is defined as 
  \begin{align}
    M(\pr) &:= 
    \begin{bmatrix}
      1 - \pr\cs& \pr\lips  
      \\
      \pr\lipeq\lipr& \cf + \pr\lipeq\lips 
    \end{bmatrix}.
    \label{eq:M_pr}
  \end{align}
  Since $M(\pr)$ is a nonnegative matrix (i.e., all its entries are nonnegative), it is Schur if and only if $I_2 - M(\pr)$ is a nonsingular $M$-matrix~\cite{plemmons1977m}.
  By definition of $M(\pr)$ (cf.~\eqref{eq:M_pr}), we write
  \begin{align*}
    I_2 - M(\pr) &= 
    \begin{bmatrix}
      \pr\cs& -\pr\lips  
      \\
      -\pr\lipeq\lipr& 1 - \cf - \pr\lipeq\lips 
    \end{bmatrix}.
  \end{align*}
  Moreover, by~\cite[Thm.~1]{plemmons1977m}, $I_2 - M(\pr)$ is a nonsingular $M$-matrix if and only if the principal minors of $I_2 - M(\pr)$ are positive.
  In turn, since $\pr\cs > 0$ holds trivially for all $\pr > 0$, this is equivalent to
  \begin{align}
    &\pr\cs(1 - \cf - \pr\lipeq\lips) - \pr^2\lipeq\lipr\lips > 0
    \iff 
    \pr < \frac{\cs(1 - \cf)}{\lipeq\lips(\cs + \lipr)}.\label{eq:condition}
  \end{align}
  The proof then follows by noting that the right-hand side of~\eqref{eq:condition} corresponds to the definition of $\bar{\pr}$ in~\eqref{eq:bar_pr} and by applying~\eqref{eq:norm_evolution} recursively to obtain the linear rate~\eqref{eq:linear_rate}.

\subsection{Proof of Lemma~\ref{lemma:linear_convergence_reduced_operator}}
\label{sec:proof_lemma_linear_convergence_reduced_operator}

Let us introduce the operator $\op: \X \to \R^{\n}$ defined as 
\begin{align}\label{eq:G_definition} 
  \op(\x) := \x + \frac{1}{\alpha}\red(\x),
\end{align} 
which allows us to equivalently rewrite the reduced operator $\x + \red(\x)$ as
\begin{align}\label{eq:averaged_operator}
  \x + \red(\x) = (1-\alpha)\x + \alpha\op(\x).
\end{align}
Since $\x + \red(\x)$ is $\alpha$-averaged in $\X$ (cf. Assumption~\ref{ass:averaged_and_metric_subregular}), the operator $\op$ is nonexpansive (cf.~\eqref{eq:non_expansive}).
Now, by using~\eqref{eq:averaged_operator}, we can write
\begin{align}
  \dists{\x + \red(\x)}{\bxx}^2
  &=\norms{(1-\alpha)\x + \alpha\op(\x)-\bxx}^2
  \notag\\
  &\stackrel{(a)}{=} 
  \norms{(1 - \alpha)(\x - \bxx) + \alpha(\op(\x) - \bxx)}^2
  \notag\\
  &\stackrel{(b)}{=} 
  (1 - \alpha)\norms{\x - \bxx}^2 + \alpha\norms{\op(\x) - \bxx}^2 
  - \alpha(1 - \alpha)\norms{\x -\bxx - \op(\x) + \bxx}^2
  \notag\\
  &\stackrel{(c)}{=} 
  (1 - \alpha)\norms{\x -\bxx}^2 + \alpha\norms{\op(\x) - \op(\bxx)}^2 
  - \alpha(1 - \alpha)\norms{\x - \op(\x)}^2
  \notag\\
  &\stackrel{(d)}{\leq} 
  \norms{\x -\bxx}^2 - \frac{1 - \alpha}{\alpha}\norms{\red(\x)}^2,\label{eq:last}
\end{align}
where in $(a)$ we use the decomposition $\bxx = (1 - \alpha)\bxx + \alpha \bxx$, in $(b)$ we use the standard algebraic property $\norms{(1 - \alpha)a + \alpha b}^2 = (1-\alpha)\norms{a}^2 + \alpha\norms{b}^2 - \alpha(1 - \alpha)\norms{a - b}^2$ which holds for all $\alpha \in \R$ and $a,b \in \R^{\n}$, in $(c)$ we use the fact that $\op(\bxx) = \bxx$ since $\bxx \in \Xeq$ and simplify the terms in the last norm, while in $(d)$ we use the fact that $\op(\x)$ is nonexpansive and that $\op(\x) - \x = \frac{1}{\alpha}\red(\x)$ (cf.~\eqref{eq:G_definition}).
The proof follows by combining~\eqref{eq:last} with the fact that $\x + \red(\x)$ is $\reg$-metric subregular (cf. Assumption~\ref{ass:averaged_and_metric_subregular}) and the fact that $\bxx \in \arg\inf_{y \in \Xeq}\dists{\x}{y}$.

\subsection{Proof of Theorem~\ref{th:linear_convergence_without_contraction}}
\label{sec:proof_th_linear_convergence_without_contraction}

Let us consider an arbitrary pair $(\x,\z) \in \X \times \Z$ and let $\xp := \x + \pr\slow(\x,\z)$ and $\zp := \fast(\x,\z)$ be the next state of system~\eqref{eq:SP_system} starting from $(\x,\z)$.
  Then, by taking $\bxx \in \arg\inf_{y \in \Xeq}\dists{\x}{y}$ and considering the definition of $\dists{\x}{\Xeq}$, we can write
  \begin{align}
    \dists{\xp}{\Xeq} 
    &\leq \dists{\x + \pr\slow(\x,\z)}{\bxx}
    \notag\\
    &\stackrel{(a)}{\leq}
    \norms{\x + \pr\red(\x) - \bxx} + \pr\norms{\slow(\x,\bzx)-\slow(\x,\z)}
    \notag\\
    &\stackrel{(b)}{=}
    \norms{(1-\pr)(\x - \bxx) + \pr(\x + \red(\x) - \bxx)} 
    + \pr\norms{\slow(\x,\bzx)-\slow(\x,\z)}
    \notag\\
    &\stackrel{(c)}{\leq}
    (1-\pr)\norms{\x - \bxx} + \pr\norms{\x + \red(\x) - \bxx} 
    + \pr\norms{\slow(\x,\bzx)-\slow(\x,\z)}
    \notag\\
    &\stackrel{(d)}{\leq}
    (1-\pr)\norms{\x - \bxx}+ \pr\ocs\norms{\x - \bxx} 
    + \pr\lips\normf{\z - \bzx}
    \notag\\
    &\stackrel{(e)}{=}
    (1-\pr\cs)\dists{\x}{\Xeq}+  \pr\lips\distf{\z}{\Zeq(\x)},\label{eq:xtp_Xeq}
  \end{align}
  where in $(a)$ we choose $\bzx \in \arg\inf_{y \in \Zeq(\x)}\distf{\z}{y}$, add and subtract $\pr\red(\x) = \pr\slow(\x,\bzx)$ (cf. Assumption~\ref{ass:reduced_operator}) within the norm and apply the triangle inequality, in $(b)$ we use the decomposition $\x = (1-\pr)\x + \pr\x$ and $\bxx = (1-\pr)\bxx + \pr \bxx$, in $(c)$ we use the triangle inequality and factor out the nonnegative scalars $\pr$ and $(1-\pr)$, 
  in $(d)$ we apply Lemma~\ref{lemma:linear_convergence_reduced_operator} and the $\lips$-Lipschitz continuity of $\slow$ (cf. Assumption~\ref{ass:lip}), while in $(e)$ we use the fact that $\bzx \in \arg\inf_{y \in \Zeq(\x)}\distf{\z}{y}$ by definition, reorganize the other terms, and use the definition of $\dists{\cdot}{\Xeq}$.
  We now evaluate the quantity $\distf{\zp}{\Zeq(\xp)}$ along the trajectories of~\eqref{eq:SP_system}, use the definition of $\distf{\cdot}{\Zeq(\x)}$, and add and subtract $\bzxz \in \arg\inf_{y \in \Zeq(\x)}\distf{\fast(\x,\z)}{y}$ to obtain
  \begin{align}
    \distf{\zp}{\Zeq(\xp)} 
    &= 
    \inf_{y \in \Zeq(\xp)}\distf{\fast(\x,\z) -\bzxz + \bzxz}{y}
    \notag\\
    &\stackrel{(a)}{\leq} 
    \normf{\fast(\x,\z) -\bzxz} +  \inf_{y \in \Zeq(\xp)}\normf{\bzxz - y}
    \notag\\
    &\stackrel{(b)}{\leq} 
    \distf{\fast(\x,\z)}{\Zeq(\x)} + \lipeq\norms{\xp - \x}
    \notag\\
    &\stackrel{(c)}{\leq} 
    \cf\distf{\z}{\Zeq(\x)} 
    + \pr\lipeq\norms{\slow(\x,\z)}
    \notag\\
    &\stackrel{(d)}{\leq} 
    \cf\distf{\z}{\Zeq(\x)}
    + \pr\lipeq\dists{\slow(\x,\z)}{\slow(\x,\bzx)} 
    + \pr\lipeq\norms{\red(\x)-\red(\bxx)}
    \notag\\
    &\stackrel{(e)}{\leq} 
    \cf\distf{\z}{\Zeq(\x)}
    + \pr\lipeq\lips\distf{\z}{\bzx} 
    + \pr\lipeq\lipr\dists{\x}{\bxx}
    \notag\\
    &\stackrel{(f)}{=} 
    \cf\distf{\z}{\Zeq(\x)}
    + \pr\lipeq\lips\distf{\z}{\Zeq(\x)}
    + \pr\lipeq\lipr\dists{\x}{\Xeq}
    ,\label{eq:ztp_fix_xt_averaged_case}
  \end{align}
  where in $(a)$ we use the triangle inequality, in $(b)$ we use the fact that $\bzxz \in \arg\inf_{y \in \Zeq(\x)}\distf{\fast(\x,\z)}{y}$, the definition of $\distf{\fast(\x,\z)}{\Zeq(\x)}$, and the $\lipeq$-Lipschitz continuity bound~\eqref{eq:lipschitz_eq} (cf. Assumption~\ref{ass:set_fix_parametrized}), in $(c)$ we apply the $\cf$-contractivity of $\fast$ (cf. Assumption~\ref{ass:fast_paracontractive}) and use the slow evolution~\eqref{eq:SP_system_slow}, in $(d)$ we choose 
  $\bzx \in \arg\inf_{y \in \Zeq(\x)}\distf{\z}{y}$, $\bxx \in \arg\inf_{y \in \Xeq}\dists{\x}{y}$, add and subtract within the last norm the term $\slow(\x,\bzx) = \red(\x)$, apply the triangle inequality and use the fact that $\red(\bxx) = 0$ by construction, in $(e)$ we use the Lipschitz continuity of $\slow$ and $\red$ (cf. Assumption~\ref{ass:lip}), while in $(f)$ we use the fact that $\bzx \in \arg\inf_{y \in \Zeq(\x)}\distf{\z}{y}$ and $\bxx \in \arg\inf_{y \in \Xeq}\dists{\x}{y}$ by definition.
  Hence, by collecting~\eqref{eq:xtp_Xeq} and~\eqref{eq:ztp_fix_xt_averaged_case}, we can write 
  \begin{align}
    \begin{bmatrix}
      \dists{\xp}{\Xeq}
      \\
      \distf{\zp}{\Zeq(\xp)}
    \end{bmatrix} 
    \leq 
                  M(\pr)
    \begin{bmatrix}
      \dists{\x}{\Xeq}
      \\
      \distf{\z}{\Zeq(\x)}
    \end{bmatrix},
    \label{eq:norm_evolution_averaged_case}
  \end{align}
  where the matrix $M(\pr) \in \R^{2 \times 2}$ is defined as in~\eqref{eq:M_pr}, although now the parameter $\cs$ has a different meaning (see Lemma~\ref{lemma:linear_convergence_reduced_operator}) from that in~\eqref{eq:norm_evolution} (see Assumption~\ref{ass:slow_contractive}).
  However, we remark that in both cases $\cs \in (0,1)$ and, thus, the matrix appearing in~\eqref{eq:norm_evolution_averaged_case} has the same properties discussed in the proof of Theorem~\ref{th:contraction}.
  Hence, the proof follows by repeating the same arguments as in the proof of Theorem~\ref{th:contraction}, see Appendix~\ref{sec:proof_th_contraction}.

  \subsection{Proof of Lemma~\ref{lemma:linear_convergence_reduced_operator_stoch}}
  \label{sec:proof_lemma_linear_convergence_reduced_operator_stoch}

  Let us introduce the operator $\op: \X \times \cD \to \R^{\n}$ defined as 
  \begin{align}\label{eq:G_definition_stoch} 
    \op(\x,\rand) := \x + \frac{1}{\alpha}\red(\x,\rand),
  \end{align} 
  which allows us to equivalently rewrite the reduced operator $\x + \red(\x,\rand)$ as
  \begin{align}\label{eq:averaged_operator_stoch}
    \x + \red(\x,\rand) = (1-\alpha)\x + \alpha\op(\x,\rand).
  \end{align}
  Since $\x + \red(\x,\rand)$ is stochastically $\alpha$-averaged in $\X$ (cf. Assumption~\ref{ass:averaged_and_metric_subregular_stoch}), the operator $\op(\x,\rand)$ is nonexpansive (cf.~\eqref{eq:non_expansive_stoch}) in a stochastic sense.
  Now, by using~\eqref{eq:averaged_operator_stoch}, we can write
  \begin{align}
    \E\left[\dists{\x + \red(\x,\rand)}{\bxx}^2\right]
    &=\E\left[\norms{(1-\alpha)\x + \alpha\op(\x,r)-\bxx}^2\right]
    \notag\\
    &\stackrel{(a)}{=} 
    \E\left[\norms{(1 - \alpha)(\x - \bxx) + \alpha(\op(\x,\rand) - \bxx)}^2\right]
    \notag\\
    &\stackrel{(b)}{=} 
    (1 - \alpha)\E\left[\norms{\x - \bxx}^2\right] + \alpha\E\left[\norms{\op(\x,\rand) - \bxx}^2\right] 
    - \alpha(1 - \alpha)\E\left[\norms{\x -\bxx - \op(\x,\rand) + \bxx}^2\right]
    \notag\\
    &\stackrel{(c)}{=} 
    (1 - \alpha)\E\left[\norms{\x -\bxx}^2\right] + \alpha\E\left[\norms{\op(\x,\rand) - \op(\bxx,\rand)}^2\right] 
    - \alpha(1 - \alpha)\E\left[\norms{\x - \op(\x,\rand)}^2\right]
    \notag\\
    &\stackrel{(d)}{\leq} 
    \E\left[\norms{\x -\bxx}^2\right] - \frac{1 - \alpha}{\alpha}\E\left[\norms{\red(\x,\rand)}^2\right]
    \notag\\
    &\stackrel{(e)}{\leq} 
    \E\left[\norms{\x -\bxx}^2\right] - \frac{1 - \alpha}{\alpha}\left(\E\left[\norms{\red(\x,\rand)}\right]\right)^2
    \notag\\
    &\stackrel{(f)}{\leq} 
    \left(1 - \tfrac{1 - \alpha}{\alpha\reg^2}\right)\dists{\x}{\bxx}^2,\label{eq:last_stoch}
  \end{align}
  where in $(a)$ we use the decomposition $\bxx = (1 - \alpha)\bxx + \alpha \bxx$, in $(b)$ we use the standard algebraic property $\norms{(1 - \alpha)a + \alpha b}^2 = (1-\alpha)\norms{a}^2 + \alpha\norms{b}^2 - \alpha(1 - \alpha)\norms{a - b}^2$ which holds for all $\alpha \in \R$ and $a,b \in \R^{\n}$, in $(c)$ we use the fact that $\op(\bxx, r) = \bxx$ since $\bxx \in \Xeq$ and simplify the terms in the last norm, in $(d)$ we use the fact that $\op(\x,\rand)$ is stochastically nonexpansive and that $\op(\x,\rand) - \x = \frac{1}{\alpha}\red(\x,\rand)$ (cf.~\eqref{eq:G_definition_stoch}), in $(e)$ we apply Jensen's inequality,
  while in $(f)$ we use the fact that $\x + \red(\x,\rand)$ is $\reg$-metric subregular in a stochastic sense (cf. Assumption~\ref{ass:averaged_and_metric_subregular_stoch}), the fact that $\bxx \in \arg\inf_{y \in \Xeq}\dists{\x}{y}$, and rearrange the terms.
  The proof follows by taking the square root of both sides of~\eqref{eq:last_stoch}.

\subsection{Proof of Theorem~\ref{th:linear_convergence_without_contraction_stoch}}
\label{sec:proof_th_linear_convergence_without_contraction_stoch}

Let us consider an arbitrary pair $(\x,\z,\rand) \in \X \times \Z \times \cD$ and let $\xp := \x + \pr\slow(\x,\z,\rand)$ and $\zp := \fast(\x,\z,\rand)$ be the next state of system~\eqref{eq:SP_system_stochastic} starting from $(\x,\z)$.
We remark that the next state $(\xp,\zp)$ is random since it depends on the random variable $\rand$.
Accordingly, we will handle it through the expectation operator $\E[\cdot]$.
  In particular, by taking $\bxx \in \arg\inf_{y \in \Xeq}\dists{\x}{y}$ and considering the definition of $\dists{\x}{\Xeq}$, we can write
  \begin{align}
    \E\left[\dists{\xp}{\Xeq}\right]
    &\leq \E\left[\dists{\x + \pr\slow(\x,\z,\rand)}{\bxx}\right]
    \notag\\
    &\stackrel{(a)}{\leq}
    \E\left[\norms{\x + \pr\red(\x,\rand) - \bxx}\right] 
     + \pr\E\left[\norms{\slow(\x,\bzx,\rand)-\slow(\x,\z,\rand)}\right]
    \notag\\
    &\stackrel{(b)}{=}
    \E\left[\norms{(1-\pr)(\x - \bxx) + \pr(\x + \red(\x,\rand) - \bxx)}\right]
    + \pr\E\left[\norms{\slow(\x,\bzx,\rand)-\slow(\x,\z,\rand)}\right]
    \notag\\
    &\stackrel{(c)}{\leq}
    (1-\pr)\norms{\x - \bxx} + \pr\E\left[\norms{\x + \red(\x,\rand) - \bxx}\right] 
    + \pr\E\left[\norms{\slow(\x,\bzx,\rand)-\slow(\x,\z,\rand)}\right]
    \notag\\
    &\stackrel{(d)}{\leq}
    (1-\pr)\norms{\x - \bxx}+ \pr\ocs\norms{\x - \bxx} 
    + \pr\lips\normf{\z - \bzx}
    \notag\\
    &\stackrel{(e)}{=}
    (1-\pr\cs)\dists{\x}{\Xeq}+  \pr\lips\distf{\z}{\Zeq(\x)},\label{eq:xtp_Xeq_stoch}
  \end{align}
  where in $(a)$ we choose $\bzx \in \arg\inf_{y \in \Zeq(\x)}\normf{\z - y}$, add and subtract $\pr\red(\x,\rand) = \pr\slow(\x,\bzx,\rand)$ (cf. Assumption~\ref{ass:reduced_operator_stoch}) within the norm and apply the triangle inequality, in $(b)$ we use the decompositions $\x = (1-\pr)\x + \pr\x$ and $\bxx = (1-\pr)\bxx + \pr \bxx$, in $(c)$ we use the triangle inequality and factor out the nonnegative scalars $\pr$ and $(1-\pr)$, 
  in $(d)$ we apply Lemma~\ref{lemma:linear_convergence_reduced_operator_stoch} and the $\lips$-Lipschitz continuity of $\slow$ (cf. Assumption~\ref{ass:lip_stoch}), while in $(e)$ we use the fact that $\bzx \in \arg\inf_{y \in \Zeq(\x)}\distf{\z}{y}$, reorganize the other terms, and use the definition of $\dists{\cdot}{\Xeq}$.
  We now evaluate the increment of $\E[\distf{\zp}{\Zeq(\xp)}]$ along the update of~\eqref{eq:SP_system_stochastic}, use the definition of $\distf{\cdot}{\Zeq(\x)}$, and add $\pm\bzxz \in \arg\inf_{y \in \Zeq(\x)}\distf{\fast(\x,\z,\rand)}{y}$ 
  to get
  \begin{align}
    \E\left[\distf{\zp}{\Zeq(\xp)}\right]
    &= 
    \E\left[\inf_{y \in \Zeq(\xp)}\distf{\fast(\x,\z,\rand) -\bzxz + \bzxz}{y}\right]
    \notag\\
    &\stackrel{(a)}{\leq} 
    \E\!\left[\!\normf{\fast(\x,\z,\rand)-\bzxz} + \inf_{y \in \Zeq(\xp)}\normf{\bzxz - y}\right]
    \notag\\
    &\stackrel{(b)}{\leq} 
    \E\left[\distf{\fast(\x,\z,\rand)}{\Zeq(\x)} + \lipeq\norms{\xp - \x}\right]
    \notag\\
    &\stackrel{(c)}{\leq} 
    \cf\distf{\z}{\Zeq(\x)} 
    + \pr\lipeq\E\left[\norms{\slow(\x,\z,\rand)}\right]
    \notag\\
    &\stackrel{(d)}{\leq} 
    \cf\distf{\z}{\Zeq(\x)}
    + \pr\lipeq\E\left[\dists{\slow(\x,\z,\rand)}{\slow(\x,\bzx,\rand)}\right]
    + \pr\lipeq\E\left[\norms{\red(\x,\rand)-\red(\bxx,\rand)}\right]
    \notag\\
    &\stackrel{(e)}{\leq} 
    \cf\distf{\z}{\Zeq(\x)}
    + \pr\lipeq\lips\distf{\z}{\bzx} 
    + \pr\lipeq\lipr\dists{\x}{\bxx}
    \notag\\
    &\stackrel{(f)}{=} 
    \cf\distf{\z}{\Zeq(\x)}
    + \pr\lipeq\lips\distf{\z}{\Zeq(\x)} 
    + \pr\lipeq\lipr\dists{\x}{\Xeq}
    ,\label{eq:ztp_fix_xt_averaged_case_stoch}
  \end{align}
  where in $(a)$ we use the triangle inequality, in $(b)$ we use the fact that $\bzxz \in \arg\inf_{y \in \Zeq(\x)}\distf{\fast(\x,\z,\rand)}{y}$, the definition of $\distf{\fast(\x,\z,\rand)}{\Zeq(\x)}$, and the $\lipeq$-Lipschitz continuity bound~\eqref{eq:lipschitz_eq_stoch} (cf. Assumption~\ref{ass:set_fix_parametrized_stoch}), in $(c)$ we apply the $\cf$-contractivity of $\fast$ (cf. Assumption~\ref{ass:fast_paracontractive_stoch}) and use the slow evolution~\eqref{eq:SP_system_stochastic_slow}, in $(d)$ we choose $\bzx \in \arg\inf_{y \in \Zeq(\x)}\distf{\z}{y}$, $\bxx \in \arg\inf_{y \in \Xeq}\dists{\x}{y}$, add and subtract within the last norm the term $\slow(\x,\bzx,\rand) = \red(\x,\rand)$, apply the triangle inequality and use the fact that $\red(\bxx,\rand) = 0$ by construction, in $(e)$ we use the Lipschitz continuity of $\slow$ and $\red$ (cf. Assumption~\ref{ass:lip_stoch}), while in $(f)$ we use the fact that $\bzx \in \arg\inf_{y \in \Zeq(\x)}\distf{\z}{y}$ and $\bxx \in \arg\inf_{y \in \Xeq}\dists{\x}{y}$ by definition.
  Hence, by collecting~\eqref{eq:xtp_Xeq_stoch} and~\eqref{eq:ztp_fix_xt_averaged_case_stoch}, we can write 
  \begin{align}
    \begin{bmatrix}
      \E[\dists{\xp}{\Xeq}]
      \\
      \E[\distf{\zp}{\Zeq(\xp)}]
    \end{bmatrix} 
    \leq 
                  M(\pr)
    \begin{bmatrix}
      \dists{\x}{\Xeq}
      \\
      \distf{\z}{\Zeq(\x)}
    \end{bmatrix},\label{eq:norm_evolution_averaged_case_stoch}
  \end{align}
  where $M(\pr)$ is defined in~\eqref{eq:M_pr}, although now the parameters in $M(\pr)$ have different meanings from those in~\eqref{eq:norm_evolution_averaged_case}. %
  Therefore, we can repeat the same arguments to conclude that for all $\pr \in (0,\min\{\bar{\pr},1\})$ (see the definition of $\bar{\pr}$ in~\eqref{eq:bar_pr_stoch}), the matrix $M(\pr)$ is Schur and the proof follows.

  \subsection{Proof of Corollary~\ref{cor:linear_convergence_stoch}}
  \label{sec:proof_linear_convergence_stoch}

   The proof follows the same steps as the proof of Theorem~\ref{th:linear_convergence_without_contraction_stoch} (see Appendix~\ref{sec:proof_th_linear_convergence_without_contraction_stoch}) with the only difference that we consider the distance $\dists{\xp}{\xstar}$ rather than the generic one $\dists{\xp}{\Xeq}$.

\end{document}